\newcommand{\andSep}{\,\,\,\text{ and }\,\,\,}
\newcommand{\ZZ}{{\mathbb{Z}}}
\newcommand{\NN}{{\mathbb{N}}}
\newcommand{\CC}{{\mathbb{C}}}
\newcommand{\dist}{{\mathrm{dist}}}
\newcommand{\ca}{C*-algebra}
\newcommand*\bigcdot{\mathpalette\bigcdot@{.75}}
\newcommand*\bigcdot@[2]{\mathbin{\vcenter{\hbox{\scalebox{#2}{$\m@th#1\bullet$}}}}}
\newcommand{\multNoSpan}{\bigcdot}
\newenvironment{psmallmatrix}
  {\left(\begin{smallmatrix}}
  {\end{smallmatrix}\right)}
\def\today{\number\day\space\ifcase\month\or   January\or February\or
   March\or April\or May\or June\or   July\or August\or September\or
   October\or November\or December\fi\   \number\year}
\newtheorem{lma}{Lemma}[section]
\newaliascnt{thmCt}{lma}
\newtheorem{thm}[thmCt]{Theorem}
\newaliascnt{corCt}{lma}
\newtheorem{cor}[corCt]{Corollary}
\newaliascnt{prpCt}{lma}
\newtheorem{prp}[prpCt]{Proposition}
\theoremstyle{definition}
\newaliascnt{dfnCt}{lma}
\newtheorem{dfn}[dfnCt]{Definition}
\newaliascnt{rmkCt}{lma}
\newtheorem{rmk}[rmkCt]{Remark}
\newaliascnt{rmksCt}{lma}
\newtheorem{rmks}[rmksCt]{Remarks}
\newaliascnt{exaCt}{lma}
\newtheorem{exa}[exaCt]{Example}
\newaliascnt{qstCt}{lma}
\newtheorem{qst}[qstCt]{Question}
\newcounter{theoremintro}
\newaliascnt{thmIntroCt}{theoremintro}
\newtheorem{thmIntro}[thmIntroCt]{Theorem}
\title{Rings and C*-algebras generated by commutators}
\author[Eusebio Gardella]{Eusebio Gardella}
\address{Eusebio Gardella
Department of Mathematical Sciences, Chalmers University of
Technology and University of Gothenburg, Gothenburg SE-412 96, Sweden.}
\email{gardella@chalmers.se}
\urladdr{www.math.chalmers.se/~gardella}
\author{Hannes Thiel}
\address{Hannes~Thiel, 
Department of Mathematical Sciences, Chalmers University of Technology and University of
Gothenburg, Gothenburg SE-412 96, Sweden.}
\email{hannes.thiel@chalmers.se}
\urladdr{www.hannesthiel.org}
\thanks{
The first named author was partially supported by the Swedish Research Council Grant 2021-04561.
The second named author was partially supported by the Knut and Alice Wallenberg Foundation (KAW 2021.0140).
}
\subjclass[2010]%
{Primary
16N60, 
46L05, 
47B47. 
Secondary
15A15, 
15A23, 
16S50, 
19K14. 
}
\keywords{commutators, $C^*$-algebras, commutativity}
\date{\today}
\begin{document}

\begin{abstract}
We show that a unital ring is generated by its commutators as an ideal if and only if there exists a natural number $N$ such that every element is a sum of $N$ products of pairs of commutators.
We show that one can take $N \leq 2$ for matrix rings, and that one may choose $N \leq 3$ for rings that contain a direct sum of matrix rings -- this in particular applies to \ca{s} that are properly infinite or have real rank zero.
For Jiang-Su-stable \ca{s}, we show that $N\leq 6$ can be arranged.

For arbitrary rings, we show that every element in the commutator ideal admits a power that is a sum of products of commutators.
Using that a \ca{} cannot be a radical extension over a proper ideal, we deduce that a \ca{} is generated by its commutators as a not necessarily closed ideal if and only if every element is a finite sum of products of pairs of commutators.
\end{abstract}

\maketitle

\section{Introduction}

The commutator of two elements $x$ and $y$ in a ring is defined as $[x,y] := xy - yx$, and it is an elementary fact that the ideal generated by all commutators is the smallest ideal whose associated quotient ring is commutative.
We study rings that are `very noncommutative', in the sense that they are generated by their commutators as an ideal; 
equivalently, these are the rings that admit no nonzero ring homomorphism
to a commutative ring.

A related property that has been studied is that of being \emph{additively} generated by commutators \cite{Har58CommutatorDivRg, Mes06CommutatorRings}.
This is however a much more restrictive condition, as for example the matrix algebras $M_n(\ZZ)$ for $n \geq 2$ do not satisfy it, while they are easily seen to  be generated by their commutators as an ideal.

For a unital ring, we show in \autoref{prp:UnitalAlgGenCommut} 
that being generated by its
commutators as an ideal implies that every element is a sum of products of \emph{pairs} of commutators, and there even is a uniform bound (depending only on the ring) on the number of summands. 
We reproduce a particular case here:

\begin{thmIntro}
\label{thmA}
Let $R$ be a unital ring.
Then the following are equivalent:
\begin{enumerate}
\item
The ring $R$ is generated by its commutators as an ideal.
\item
The ring $R$ is generated by its commutators as a ring.
\item
There exists $N \in \NN$ such that for every $a \in R$ there exist $b_j,c_j,d_j,e_j \in R$, for $j=1,\ldots,N$, such that
\[
a = \sum_{j=1}^N [b_j,c_j][d_j,e_j].
\]
\end{enumerate}
\end{thmIntro}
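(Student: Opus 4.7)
The implications (3) $\Rightarrow$ (2) $\Rightarrow$ (1) are immediate, since any sum of products of pairs of commutators lies in the subring (and hence the ideal) generated by the commutators. So the substantive direction is (1) $\Rightarrow$ (3).

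To prove (1) $\Rightarrow$ (3), my plan is as follows. Since $R$ is unital and equal to its own commutator ideal, there is a finite expression
\[
1 = \sum_{i=1}^n u_i [x_i, y_i] v_i
\]
for some $u_i, v_i, x_i, y_i \in R$. Multiplying out $1 \cdot 1$ using this identity yields $1 = \sum_{i,j} u_i [x_i, y_i] v_i u_j [x_j, y_j] v_j$, and hence for every $a \in R$ the element $a = a \cdot 1$ is a sum of at most $n^2$ terms of the form $u [x, y] v [x', y'] w$, each containing exactly two commutators. The crux of the proof is thus to show that any such ``sandwich'' can be rewritten as a bounded sum of products of pairs of commutators.

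My main tools are the Leibniz-type identities
\[
r [x, y] = [rx, y] - [r, y] x \andSep [x, y] s = [xs, y] + x [y, s],
\]
which allow absorbing a factor on either side of a commutator into that commutator at the cost of a residual term of the same valence. Applying them alternately, I first absorb the outer factors $u$ and $w$ in $u [x, y] v [x', y'] w$ into the two commutators, reducing the problem to expressing a generic element of the form $[A, B] v [D, E]$ as a bounded sum of products of pairs of commutators; the resulting count will be the uniform $N$ claimed in (3).

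The main obstacle is this last middle-absorption step: a naive iteration of the Leibniz identities on $[A, B] v [D, E]$ simply interchanges $v$ with an interior entry of one of the commutators, producing a term of exactly the same shape rather than reducing it. I expect to resolve this by applying the two identities in a coordinated way, exploiting the antisymmetry $[A, B] = -[B, A]$, and possibly substituting the expression for $1$ as $\sum u_i [x_i, y_i] v_i$ in place of $v$, so that the remaining correction terms split into pieces amenable to a bounded number of further absorption steps and yield the uniform constant $N$ as an explicit function of $n$.
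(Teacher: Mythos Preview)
Your reduction to handling terms of the form $[A,B]\,v\,[D,E]$ is correct, and you are right that this is where the real difficulty lies. However, your proposed resolution does not work, and the gap is structural rather than technical. The Leibniz identities you list are valid in every ring; applying them to $[A,B]\,v\,[D,E]$ always produces one clean product of two commutators plus a residual of exactly the same shape (with the middle factor shuffled to one side or the other). For instance,
\[
[A,B]\,v\,[D,E] \;=\; [Av,B][D,E] \;+\; A\,[B,v][D,E],
\]
and the second summand is again an element times a product of two commutators. Iterating, or inserting the expression $1=\sum u_i[x_i,y_i]v_i$ in place of $v$, only lengthens the sandwich; it never terminates. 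So ``coordinated use of antisymmetry and substitution'' cannot by itself close the loop --- you need an additional input that genuinely uses the hypothesis that $R$ is generated by commutators.

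The paper supplies this input via a commutativity theorem: a semiprime ring satisfying $\bigl[[R,R],[R,R]^2\bigr]=\{0\}$ is commutative. From this one deduces that the ideal generated by $\bigl[[R,R],[R,R]^2\bigr]$ is all of $R$ (otherwise a maximal quotient would be simple, hence prime, hence commutative by the theorem, yet still generated by commutators --- a contradiction). This gives a much stronger decomposition of the unit, namely $1=\sum_j a_j\,[x_j,\,y_jz_j]\,b_j$ with $x_j,y_j,z_j$ themselves commutators. For such triple-commutator terms the Leibniz identities \emph{do} terminate: one checks directly that $R\,\bigl[[R,R]_1,[R,R]_1^{\multNoSpan 2}\bigr]_1\,R \subseteq \sum^{12}[R,R]_1^{\multNoSpan 2}$, because the correction term $[yz,a]x$ now lands in $[R,R]_1\multNoSpan[R,R]_1$ rather than in $R\multNoSpan[R,R]_1$. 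The commutativity theorem is the missing idea, and without it (or a substitute of comparable strength) the argument cannot be completed.
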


The fact that~(1) implies~(3) in the above theorem is most surprising, for two reasons: 
First, it states that double-products are sufficient to generate $R$. 
And second, it also shows that there exists a uniform bound on the number of summands needed. 

A result of Baxter \cite[Theorem~1]{Bax65CommutatorSubgroupRing} asserts that a simple ring is either a field, or every of its elements is a sum of products of pairs of commutators.
\autoref{thmA} recovers Baxter's result in the unital case, and moreover shows that the minimal number of summands required is uniformly bounded.

\medskip

\autoref{thmA} can be viewed as a Waring-type result, as we explain now. 
The classical Waring problem asks if for each $k \geq 1$ there exists a uniform bound $g(k)$ such that every integer can be written as a sum of at most $g(k)$ many $k$-th powers of integers.
This was solved positively by Hilbert in 1909, and since then numerous generalizations and variations of this problem have been considered.
In the context of general rings, the Waring problem was introduced by Bre\v{s}ar in \cite{Bre20CommutatorsImageNCPoly}; see also \cite{Lee22AddSubgpGenNCPoly, BreSem23WaringPbmMatrixAlgs, BreSem23WaringPbmMatrixAlgs2}.
Here, given a unital ring $R$ and a polynomial $f \in \ZZ\langle x_1,\ldots,x_d \rangle$ in $d$ noncommuting variables, one considers the set $f(R) := \{ f(a_1,\ldots,a_d)\colon a_1,\ldots,a_d \in R\}$ and asks if there exists an integer $N$ such that every element in $R$ is a sum of at most $N$ elements in $f(R)$.
The classical Waring problem is the case $R=\ZZ$ and $f(x)=x^k$.
Our results apply to the polynomial in four variables given by
\[
f(a,b,c,d) 
= [a,b][c,d]
= abcd - bacd -abdc + badc.
\]
\autoref{thmA} shows that if a unital ring $R$ is generated by $f(R)$ then there is a constant $N$ (depending only on $R$) such that every element is a sum of at most $N$ elements in $f(R)$.

The question of when a unital ring is generated by its commutators has also been studied in 
\cite{Ero22SubrgGenCommutators}.

\medskip

For not necessarily unital rings that are generated by their commutators as an ideal, we show in \autoref{prp:NonunitalAlgGenCommut} that sums of products of commutators contain an ideal over which the ring is a radical extension.
Here, a ring $R$ is said to be a radical extension over a subring $S \subseteq R$ if for every $x \in R$ there exists $n \geq 1$ with $x^n \in S$. 
The following is a particular case of \autoref{prp:NonunitalAlgGenCommut}.

\begin{thmIntro}
\label{thmB}
Let $R$ be a (not necessarily unital) ring that is generated by its commutators as an ideal.
Then for every $a \in R$, there exist $n,m \geq 1$ and $b_j,c_j,d_j,e_j \in R$ for $j = 1,\ldots,n$ such that
\[
a^m = \sum_{j=1}^n [b_j,c_j][d_j,e_j].
\]
\end{thmIntro}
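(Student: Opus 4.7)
My plan is to deduce the theorem from an ideal-theoretic strengthening of \autoref{thmA} by passing to the unitization $\tilde{R} := R \oplus \ZZ$. Two setup observations are key. The hypothesis forces $R$ to be idempotent, since every commutator $[x, y] = xy - yx$ lies in $R^2$; thus $[R, R] \subseteq R^2$, and the ideal it generates---which by assumption equals $R$---is contained in $R^2$. Let $T \subseteq R$ denote the additive subgroup of sums of products of pairs of commutators; the goal is to show $a^m \in T$ for every $a \in R$ and some $m \geq 1$.

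Commutators interact nicely with the unitization: for $u = a + n$ and $v = b + m$ in $\tilde R$ (with $a, b \in R$ and $n, m \in \ZZ$), one has $[u, v] = [a, b]$, since scalars commute with everything. Hence commutators, and thus also sums of products of pairs of commutators, taken in $\tilde R$ coincide with those taken in $R$, so the set $T$ is intrinsic. A direct check also shows that the ideal of $\tilde R$ generated by commutators equals $R$, so $R$ is the commutator ideal of the unital ring $\tilde R$. Since $R \neq \tilde R$, \autoref{thmA} does not apply directly to $\tilde R$.

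The crucial ingredient I would prove is the following strengthening of \autoref{thmA}: in any unital ring $S$ with commutator ideal $J$, every $a \in J$ satisfies $a^m \in T_S$ for some $m \geq 1$, where $T_S$ is the analogue of $T$ for $S$. Once established, applying this with $S = \tilde R$ and $J = R$ gives $a^m \in T_{\tilde R} = T$. To prove the strengthening, I would write $a = \sum_i p_i [x_i, y_i] q_i$ (using $a \in J$), expand $a^m$ as a product of $m$ copies, and iteratively apply the Leibniz-type identities
\[
r[x, y] = [rx, y] - [r, y] x \andSep [x, y] s = [xs, y] - x [s, y]
\]
to migrate commutator factors through the expansion and pair them up.

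The main obstacle will be controlling the residual terms of the form (commutator) $\cdot$ (element) $\cdot$ (commutator) produced by each application of the Leibniz identities; these do not reduce to $T_S$ in one step, and the naive iteration on them yields circular substitutions. The role of the power $a^m$, in contrast with the exponent $1$ sufficing in \autoref{thmA}, is to supply enough commutator factors in each summand of the expansion so that these residuals can always be absorbed. Such accumulation is unnecessary in the unital setting of \autoref{thmA}, where multiplication by $1$ provides fresh commutator factors on demand via an identity of the form $1 = \sum_i r_i [x_i, y_i] s_i$, which is available there because $1$ lies in the commutator ideal.
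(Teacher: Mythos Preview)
Your setup is sound: passing to the unitization $\tilde R$ and observing that $[\tilde R,\tilde R]_1=[R,R]_1$ (so that $T$ is intrinsic and $R$ is precisely the commutator ideal of $\tilde R$) is exactly how the paper frames the problem. The gap is in the core step, where you hope to reduce $a^m$ to a sum of products of pairs of commutators by Leibniz manipulations alone.

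The circularity you yourself flag does not dissolve when $m$ grows. Write $L_0 := [R,R]_1$. Your identities give $r[x,y] = [rx,y]-[r,y]x$ and $[x,y]s=[xs,y]-x[s,y]$, from which one checks that, modulo the subgroup $T$ generated by $L_0^{\multNoSpan 2}$, the three sets $L_0 \multNoSpan R \multNoSpan L_0$, $L_0^{\multNoSpan 2} \multNoSpan R$ and $R \multNoSpan L_0^{\multNoSpan 2}$ all generate the same additive subgroup, and that this subgroup is closed under left and right multiplication by $R$. Consequently every product $r_0 c_1 r_1 \cdots c_m r_m$ with $m\ge 2$ commutator factors lands in $T + L_0 \multNoSpan R \multNoSpan L_0$, but nothing in the iteration forces the $L_0 \multNoSpan R \multNoSpan L_0$ residue to shrink; adding more commutator factors just reproduces the same residual class. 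So the assertion that ``enough commutator factors'' allow the residuals to be absorbed is unsupported, and I do not see how to rescue it by elementary rewriting.

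The paper supplies the missing mechanism, and it is structural rather than combinatorial. One first shows, via a Lie-ideal computation (\autoref{prp:Ideal-L-L2}), that the two-sided ideal $I := \tilde R\big[[R,R],[R,R]^2\big]\tilde R$ is already contained in $T$. The decisive input is then \autoref{prp:CommutativitySemiprime}: a semiprime ring $S$ satisfying $\big[[S,S],[S,S]^2\big]=\{0\}$ is commutative. Passing $\tilde R/I$ through its lower nilradical (this is \autoref{prp:CommutatorIdealNil}), one concludes that the commutator ideal of $\tilde R/I$, namely $R/I$, is nil; hence every $a\in R$ has a power in $I\subseteq T$. The nilpotency of $a$ modulo $I$ thus comes from the Lie structure theory of (semi)prime rings (Herstein, Lanski--Montgomery), not from any finite sequence of Leibniz rewritings. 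Your proposal is missing an ingredient of this depth.
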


Of course, unital rings are never radical extensions over proper ideals.
By \cite[Proposition~2.2]{GarThi23arX:PrimeIdealsCAlg}, \ca{s} have this property as well.
Using these facts, we obtain the following characterization of when a \ca{} is generated by its commutators; see \autoref{prp:CommutatorsCAlg}.

\begin{thmIntro}
\label{thmC}
Let $A$ be a \ca.
Then the following are equivalent:
\begin{enumerate}
\item
The \ca{} $A$ is generated by its commutators as an ideal.
\item
The \ca{} $A$ is generated by its commutators as a ring.
\item
For every $a \in A$ there exist $n \in \NN$ and $b_j,c_j,d_j,e_j \in A$ for $j = 1,\ldots,n$ such that
\[
a = \sum_{j=1}^n [b_j,c_j][d_j,e_j].
\]
\end{enumerate}
\end{thmIntro}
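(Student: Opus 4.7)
The implications $(3) \Rightarrow (2) \Rightarrow (1)$ are formal and immediate: each summand $[b_j,c_j][d_j,e_j]$ already lies in the subring generated by commutators, and that subring is contained in the ideal generated by commutators. Thus the only direction requiring argument is $(1) \Rightarrow (3)$.

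For $(1) \Rightarrow (3)$, the plan is to apply \autoref{thmB} to conclude that for every $a \in A$ there exists $m \geq 1$ such that $a^m$ is a finite sum of products of pairs of commutators. Let $P \subseteq A$ denote the additive subgroup consisting of all such finite sums. Then $A$ is a radical extension of $P$ in the sense that every element of $A$ has some positive power in $P$.

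To deduce $P = A$, I would appeal to \cite[Proposition~2.2]{GarThi23arX:PrimeIdealsCAlg}, which states that a C*-algebra is not a radical extension over any proper (not necessarily closed) two-sided ideal. Since the quoted proposition is phrased for ideals, the main obstacle is to verify that $P$ itself is a two-sided ideal of $A$, i.e.\ that $A \cdot P \cdot A \subseteq P$. Concretely, this amounts to rewriting every expression of the form $r[a,b][c,d]s$ as a finite sum of products of pairs of commutators. The basic identities
\[
r[a,b] = [ra,b] - [r,b]\,a \andSep [c,d]\,s = [cs,d] - c[s,d]
\]
together with analogous ones allow such a rewriting to begin, but they also introduce mixed ``sandwich'' terms of the form $[u,v] \cdot w \cdot [x,y]$ which must themselves be reduced by further commutator manipulations. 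Pushing this reduction through, and verifying that the process terminates with an element of $P$, is the key technical step I expect to be the main difficulty.

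Once $P$ is shown to be a two-sided ideal of $A$, \autoref{thmB} exhibits $A$ as a radical extension of $P$, so \cite[Proposition~2.2]{GarThi23arX:PrimeIdealsCAlg} forces $P = A$. This is precisely statement~(3), completing the proof.
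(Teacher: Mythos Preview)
Your overall strategy---invoke \autoref{thmB} to get a radical extension and then appeal to \cite[Proposition~2.2]{GarThi23arX:PrimeIdealsCAlg}---is exactly the paper's route. The gap is the step you flag yourself: showing that $P=[A,A]^2$ is a two-sided ideal. The identities you write down are correct, but the reduction they give is circular. Starting from $r[a,b][c,d]$ and expanding $r[a,b]=[ra,b]-[r,b]a$ and then $a[c,d]=[ac,d]-[a,d]c$, you arrive at two terms in $P$ plus a residual term $[r,b][a,d]\,c$, i.e.\ an element of $P$ multiplied on the \emph{right}. Attacking that term symmetrically produces a residual of the form $y\,[u,v][w,x]$, i.e.\ left multiplication again. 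The process does not terminate. In fact, whether $[A,A]^2$ is always an ideal in a \ca{} is closely tied to the open \autoref{qst:Commutators}(b), so this is not a detail one can expect to push through by elementary manipulation.

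The paper sidesteps the issue by not proving that $P$ is an ideal at all. The full statement behind \autoref{thmB} (namely \autoref{prp:NonunitalAlgGenCommut}) already produces an honest two-sided ideal $I:=\widetilde{A}\big[[A,A],[A,A]^2\big]\widetilde{A}$ with $I\subseteq[A,A]^2$ and $A/I$ nil. The inclusion $I\subseteq[A,A]^2$ comes from \autoref{prp:Ideal-L-L2}: for $L=[A,A]$ one has $a[x,yz]=[ax,yz]+[yz,a]x\in L^2$ directly, because the extra factor $[yz,a]$ is itself a commutator, so no circularity arises. Now \cite[Proposition~2.2]{GarThi23arX:PrimeIdealsCAlg} applies to the \emph{ideal} $I$, giving $I=A$ and hence $[A,A]^2=A$. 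So the fix to your argument is simply to cite the stronger \autoref{prp:NonunitalAlgGenCommut} rather than \autoref{thmB}, and apply the radical-extension result to $I$ rather than to $P$.
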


If a \ca{} is generated by its commutators as an ideal, then it admits no one-dimensional irreducible representations.
For unital \ca{s}, the converse also holds, but there exist nonunital \ca{s} that admit no one-dimensional irreducible representations which are not generated by their commutators as an ideal;
see \autoref{exa:NoCharNotGenCommutators}.

We note that, unless $A$ is unital, it is not clear if there exists a uniform bound on the number of summands in part~(3) of \autoref{thmC};
see \autoref{qst:BoundNonunitalCAlg}.

Robert showed in \cite[Theorem~3.2]{Rob16LieIdeals} that if a unital \ca{} $A$ admits no one-dimensional irreducible representations, then every element is a sum of commutators with a sum of products of commutators.
\autoref{thmA} strengthens Robert's result by showing that is suffices to consider summands that are products of commutators;
see \autoref{rmk:ReplaceCommutatorsByTheirProducts}.
We point out that it is not clear that every commutator in a \ca{} (let alone in a ring) is a sum of products of pairs of commutators;
see \autoref{qst:Commutators}.
This and related questions are discussed in \autoref{sec:Outlook}.

Our proofs rely on the theory of Lie ideals in associative rings, as developed by Herstein \cite{Her70LieStructure} and others in the 1950s and 1960s.
In the context of \ca{s}, Lie ideals have been studied by Miers \cite{Mie81ClosedLie} and coauthors in the 1970s and 1980s, and more recently by Bre\v{s}ar, Kissin, and Shulman \cite{BreKisShu08LieIdeals}, and Robert \cite{Rob16LieIdeals}.

\medskip

In \autoref{sec:BoundsRing} we study the invariant $\xi(R)$ that associates to a unital ring $R$ generated by its commutators, the minimal $N$ for which part~(3) of \autoref{thmA} holds;
see \autoref{dfn:Xi}.
We obtain estimates for this invariant for matrix rings over arbitrary unital rings (\autoref{prp:MatrixSizeN}), for division rings (\autoref{exa:DivisionRing}), for semisimple, Artinian rings (\autoref{exa:Semisimple}), as well as
for unital rings that contain a direct sum of matrix rings (\autoref{prp:XiMatrixSubalgebra}).

In \autoref{sec:BoundsCAlg}, we specialize to unital \ca{s} and we obtain estimates for the invariant $\xi$ for \ca{s} that are properly infinite (\autoref{exa:ProperlyInfinite}), have real rank zero (\autoref{prp:RR0}), or contain a unital copy of the Jiang-Su algebra $\mathcal{Z}$ (\autoref{prp:DimensionDrop}). 
The latter in particular applies to all \ca{s} covered by the Elliott classification programme,
as well as to the reduced group \ca{s} $C^*_\lambda(\mathbb{F}_n)$ of all nonabelian free groups. 

In forthcoming work \cite{GarThi24pre:ProdCommutatorsVNA}, we show that $\xi(M) \leq 2$ for von Neumann algebras~$M$ that have no commutative summand, and that many such von Neumann algebras even satisfy the optimal value $\xi(M)=1$.
In \cite{BreGarThi24pre:ProdCommutatorsMatrixRgs}, we show that $\xi(M_n(D))=1$ for every division ring $D$ with infinite center and $n \geq 2$, and we give an example of a commutative, unital ring $R$ such that $\xi(M_2(R)) = 2$.

\medskip

We have written this paper with both ring theorists and operator algebraists in mind. 
To make it accessible to both communities, we have included some basic definitions and statements of useful results along the way. 
We also sometimes mention $L^p$-operator algebras, for which we refer the reader to \cite{Gar21ModernLp}  for an introduction.

\subsection*{Acknowledgements}

The authors thank Matej Bre\v{s}ar, Tsiu-Kwen Lee and Leonel Robert for valuable comments on earlier versions of this paper.

\section{A commutativity result}
\label{sec:Commutativity}

In this section, we study rings $R$ satisfying $\big[[R,R],[R,R]^2\big]=\{0\}$, that is, rings where commutators commute with products of pairs of commutators.
We show that a semiprime ring with this property is automatically commutative;
see \autoref{prp:CommutativitySemiprime}.
For general rings, said condition implies that the commutator ideal is nil, meaning that each of its elemens is nilpotent;
see \autoref{prp:CommutatorIdealNil}.

\medskip

Given elements $x,y$ in a ring, we use $[x,y] := xy - yx$ to denote the (additive) commutator.
Following the customary convention in algebra, when $X$ and $Y$ are subsets of a ring, we use $[X,Y]$ to denote the additive subgroup generated by the set of commutators $[x,y]$ with $x \in X$ and $y \in Y$.
We sometimes consider just the set of commutators, and then we use the notation
\[
[X,Y]_1 := \big\{ [x,y]\colon x \in X, y \in Y \big\}
\]
for given subsets $X$ and $Y$ of a ring.

Similarly, we use $XY$ to denote the additive subgroup generated by the set $\{xy\colon x \in X, y \in Y \}$, and we write $X^2$ for $XX$.
If we want to specify the set of products, we use the notation
\[
X \multNoSpan Y := \big\{ xy \colon x\in X, y\in Y \big\}
\]
for given subsets $X$ and $Y$ of a ring. Note that $XY$ is therefore the 
additive subgroup generated by $X\multNoSpan Y$.

An ideal $I\subseteq R$ in a ring is said to be \emph{prime} if $I \neq R$ and whenever $J_1,J_2 \subseteq R$ are ideals with $J_1 J_2 \subseteq I$, then we have $J_1 \subseteq I$ or $J_2 \subseteq I$.
A ring $R$ is \emph{prime} if~$\{0\}$ is a prime ideal, that is, whenever $I,J\subseteq R$ are ideals and $I J=\{0\}$, then we have $I=\{0\}$ or $J=\{0\}$. 
(Equivalently, whenever $a,b\in R$ satisfy $aRb=\{0\}$, then either $a=0$ or $b=0$.)
Further, a ring $R$ is \emph{semiprime} if the intersection of all prime ideals in $R$ is $\{0\}$.
(Equivalenty, whenever $a \in R$ satisfies $aRa = \{0\}$, then $a=0$.)
We refer to \cite[Section~10]{Lam01FirstCourse2ed} for details.

An additive subgroup $L$ of a ring $R$ is said to be a \emph{Lie ideal} if $[R,L] \subseteq L$, or equivalently $[L,R] \subseteq L$.
Finally,
recall that a ring $R$ is said to have no $2$-torsion if $x\in R$ and 
$2x=0$ imply $x=0$.

\begin{prp}
\label{prp:CommutativitySemiprimeNo2Torsion}
Let $R$ be a ring, and let $L \subseteq R$ be a Lie ideal.
Consider the following conditions:
\begin{enumerate}
\item
We have $[L,R]=\{0\}$, that is, $L$ is a subset of the center of $R$.
\item
We have $[L,L]=\{0\}$.
\item
We have $[L,L^2]=\{0\}$.
\end{enumerate}
Then the implications `(1)$\Rightarrow$(2)$\Rightarrow$(3)' hold. 
If $R$ is semiprime, then the implication `(3)$\Rightarrow$(2)' holds. 
Moreover, if $R$ is semiprime and has no 2-torsion, then all conditions are equivalent. 
\end{prp}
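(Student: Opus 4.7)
The plan is to prove the four implications in turn. $(1)\Rightarrow(2)$ is immediate since $L \subseteq R$, and $(2)\Rightarrow(3)$ follows from the Leibniz identity $[l, m_1 m_2] = [l, m_1] m_2 + m_1 [l, m_2]$ for $l, m_1, m_2 \in L$, extended additively to $L^2$.

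The crucial step is $(3)\Rightarrow(2)$ under semiprimeness. I would fix $l, m \in L$, set $c := [l, m]$, and aim to show $c = 0$. Since $c$ lies in both $L$ (as $L$ is a Lie ideal) and $L^2$ (by definition of the commutator), hypothesis~(3) applied in each role yields $[c, L] = 0$ and $[c, L^2] = 0$. My first step is to verify that $c^2 = 0$: expanding $(lm - ml)^2$ into the four triple products $lmlm, lmml, mllm, mlml$ and using $lm^2 = m^2 l$ and $ml^2 = l^2 m$ (two special cases of~(3)), each term reduces to $l^2 m^2$, and the signed sum vanishes. To pass from $c^2 = 0$ to $c = 0$ it suffices by semiprimeness to show $cRc = 0$. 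For any $r \in R$, the identity $cr = rc + [c, r]$ combined with $c^2 = 0$ gives $crc = [c, r]\, c$, and combining with $c[c, r] = [c, r]\, c$ (since $[c, r] \in L$ commutes with $c$) yields $2\, crc = 0$.

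To handle potential $2$-torsion, I would further compute $(crc)\, R\, (crc)$. Iterating the previous simplifications gives $(crc)\, r'\, (crc) = uvuc$ where $u := [c, r]$ and $v := [c, r']$. Note that $L^2$ is itself a Lie ideal of $R$ (a direct Leibniz check) and $[L^2, L^2] = 0$ (another consequence of~(3) via Leibniz). Since $c \in L^2$, both $u$ and $v$ lie in $L^2$, and the elements $u, v, c$ pairwise commute, so $uvuc = u^2 v c$. Invoking the Jacobi identity, I can write $u = [l, [m, r]] - [m, [l, r]] = c_1 - c_2$ as a difference of two pure commutators in $[L, L]$, each with both arguments in $L$. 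The $c^2 = 0$ step applied to each summand gives $c_1^2 = c_2^2 = 0$, and $c_1, c_2 \in L^2$ commute, yielding $u^2 = -2 c_1 c_2$. Hence $(crc)\, R\, (crc) \subseteq 2R$.

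Combining $(crc)\, R\, (crc) \subseteq 2R$ with $2\, crc = 0$, I conclude by passing to prime quotients $R/P$ of $R$: in a quotient of characteristic $\neq 2$ the relation $2\, crc \equiv 0$ already forces $crc \equiv 0$, while in a quotient of characteristic $2$ the stronger inclusion $(crc)\, R\, (crc) \equiv 0$ combined with primeness forces $crc \equiv 0$. Intersecting over all prime ideals yields $crc = 0$ in $R$; hence $cRc = 0$ and $c = 0$ by semiprimeness. Finally, $(2) \Rightarrow (1)$ under the extra no-$2$-torsion hypothesis is Herstein's classical theorem that a commutative Lie ideal of a semiprime $2$-torsion-free ring is central, which I would cite. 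The main obstacle is the characteristic-$2$ refinement in $(3)\Rightarrow(2)$: the naive identity $2\, crc = 0$ is vacuous in characteristic $2$, and the Jacobi-plus-pure-commutator computation is needed to obtain the stronger $(crc)\, R\, (crc) \subseteq 2R$ before semiprimeness can close the argument.
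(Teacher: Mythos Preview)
Your proof is correct, and your route for $(3)\Rightarrow(2)$ is genuinely different from the paper's. Both arguments start by establishing $[x,y]^2=0$ for $x,y\in L$ via the same computation. From there, the paper shows that for $u=[x,y]$ and arbitrary $a\in R$ one has $(ua)^4=0$: using Jacobi to write $[u,a]=s+t$ with $s,t\in[L,L]_1$, observing $s^2=t^2=0$ and $st=ts$ (since $s,t\in L\cap L^2$), hence $[u,a]^3=0$, and then $(ua)^4=[u,a]^3ua=0$ via $u^2=0$. The conclusion $u=0$ then comes from Herstein's extension of Levitzki's theorem on nil one-sided ideals of bounded index in semiprime rings. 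Your argument instead proves $2crc=0$ and the sharper inclusion $(crc)R(crc)\subseteq 2R$ directly, then finishes by passing to prime quotients and splitting on characteristic. Your approach is more elementary in that it avoids the Levitzki--Herstein citation and uses only the definition of (semi)primeness, at the cost of a slightly longer explicit computation (the $u^2=-2c_1c_2$ step). The paper's approach packages the endgame into a single powerful lemma. For $(2)\Rightarrow(1)$ both arguments invoke Herstein; the paper uses the specific form ``$[y,[y,M]]=0\Rightarrow[y,M]=0$ for $M$ a Lie ideal in a $2$-torsion-free semiprime ring'' applied with $M=R$, which is exactly the content of the theorem you cite.
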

\begin{proof}
It is clear that~(1) implies~(2). 
If $[L,L]=0$, namely if every element of $L$ commutes with $L$, then clearly also every element of $L$ commutes with products of elements of $L$, so~(2) implies~(3). 
\vspace{.2cm}

Assume that $R$ is semiprime. We will show that (3) 
implies (2), using an argument
inspired by the proof of \cite[Lemma~1]{LanMon72LieStrPrimeChar2}.
We first establish the following:

Claim: \emph{For every  $x,y \in L$, we have $[x,y]^2 = 0$.}
Using that $y$ commutes with $yx$, we have $yxy=yyx$.
Similarly, we have $xxy = xyx$.
Using this at the last step, we get
\[
[x,y]^2 
= xyxy - xyyx - yxxy + yxyx
= x(yxy-yyx) - y(xxy - xyx)
= 0,
\]
which proves the claim.

Now, let $x,y \in L$ and set $u := [x,y]$.
We will show that $u = 0$.
Let $a \in R$.

Claim: \emph{We have $(ua)^4=0$}.
Applying the Jacobi identity, we have
\[\label{eqn:2.1}\tag{2.1}
[u,a]
= \big[[x,y],a\big]
= \big[x,[y,a]\big] + \big[y,[a,x]\big].
\]
Since $L$ is a Lie ideal, it follows that both $[y,a]$ and 
$[a,x]$ belong to $L$. Thus
$[x,[y,a]]$ and $[y,[a,x]]$ belong to $[L,L]$, and hence their squares are zero by the above claim.
Further, $[x,[y,a]]$ and $[y,[a,x]]$ both belong to $L$ (again because $L$ is a Lie ideal), and they also belong to $L^2$. 
In particular, they commute.
Set $s=[x,[y,a]]$ and $t=[y,[a,x]]$, and note that $s^2=t^2=0$ by
the claim above. 
Using that~$s$ and~$t$ commute at the second step, it follows that
\[
[u,a]^3
\stackrel{\eqref{eqn:2.1}}{=} \big( \big[x,[y,a]\big] + \big[y,[a,x]\big] \big)^3
= s^3+3s^2t+3st^2+t^3=0.
\]
Using that $u^2 = 0$ (by the claim) at the third step, one easily checks that
\[
0 = [u,a]^3ua
= (ua - au)^3 ua
= (ua)^4,
\]
as desired.

It follows that $\{u\} \cup uR$ is a right ideal of $R$ that is nil of bounded index at most $4$.
By Herstein's generalization of Levitzki's theorem, \cite[Lemma~1.1]{Her69TopicsRngThy}, it follows that $u=0$;
see also \cite[Exercise~10.13]{Lam01FirstCourse2ed}.
\vspace{.2cm}

Suppose that $R$ is semiprime and has no 2-torsion. 
To show that~(2) implies~(1), assume that $[L,L]=\{0\}$.
Let $x \in L$ and $a \in R$.
Since~$L$ is a Lie ideal, we have $[x,a] \in L$, and thus $[x,[x,a]]=0$.
Thus, $[x,[x,R]]=\{0\}$.
By \cite[Theorem~1]{Her70LieStructure}, if $M$ is a Lie ideal in a semiprime ring $S$ without $2$-torsion, and if $y \in S$ satisfies $[y,[y,M]]=\{0\}$, then $[y,M]=\{0\}$.
Applied to $y=x$ and $M=R$, it follows that $[x,R]=\{0\}$.
Thus, $[L,R]=\{0\}$, as desired.
\end{proof}

In \autoref{prp:CommutativitySemiprimeNo2Torsion}, condition~(2) need not imply~(1) in semiprime rings with $2$-torsion, as we show in the following example.

\begin{exa}
Let $\mathbb{F}$ be a field of characteristic $2$, set $R := M_2(\mathbb{F})$, the ring of $2$-by-$2$ matrices over $\mathbb{F}$, and 
\[
L := \left\{ 
\begin{pmatrix}
\mu & \lambda \\
\lambda & \mu \\
\end{pmatrix}\colon \mu, \lambda \in \mathbb{F}
\right\}.
\]
Then $R$ is semiprime and $[L,L]=\{0\}$ (so that $[L,L^2]=0$
as well).
Moreover, using that $\mathbb{F}$ has characteristic 2, a direct computation shows
that $L$ is a Lie ideal in $R$.
On the other hand, since the center of~$R$ consists only of scalar multiples of the identity, we see that $L$ is not contained in the center and thus $[L,R]\neq\{0\}$.
\end{exa}

We now specialize to the case that the Lie ideal is the commutator subgroup.

In preparation for the proof of the next result, let us recall that the characteristic of a ring $R$ is defined as the smallest natural number $n \in \NN$ with $n \geq 1$ such that $nR = \{0\}$, and as $0$ if no such $n$ exists.
Note that a prime ring~$R$ has characteristic $n \geq 1$ if and only if there exists $a\in R\setminus\{0\}$ with $na = 0$.
Indeed, given any $b \in R$, we have $aR(nb) = (na)Rb = \{0\}$.
Using that $R$ is prime, we have $a = 0$ or $nb = 0$, but $a \neq 0$ and so $nb = 0$.
In particular, a prime ring has characteristic $0$ if and only if it has no torsion.

\begin{thm}
\label{prp:CommutativitySemiprime}
Let $R$ be a semiprime ring.
Then the following are equivalent:
\begin{enumerate}
\item
The ring $R$ is commutative.
\item
We have $\big[[R,R],R]=\{0\}$, that is, $[R,R]$ is a subset of the center of $R$.
\item
We have $\big[[R,R],[R,R]^2]=\{0\}$.
\item
We have $\big[[R,R],[R,R]\big]=\{0\}$.
\end{enumerate}
\end{thm}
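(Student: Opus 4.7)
The plan is to close the cycle by proving $(1) \Rightarrow (2) \Rightarrow (4) \Rightarrow (3)$ trivially and then tackling the substantive implication $(3) \Rightarrow (1)$. The first three are immediate: (1) makes all commutators vanish; (2) gives (4) since $[R,R] \subseteq R$; and (4) gives (3) via the Leibniz identity $[c, uv] = [c,u]v + u[c,v]$, which vanishes when $c, u, v \in [R,R]$ by (4).

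For $(3) \Rightarrow (1)$, I would set $L := [R, R]$, which is a Lie ideal by Jacobi. Condition (3) says exactly $[L, L^2] = \{0\}$, so \autoref{prp:CommutativitySemiprimeNo2Torsion}, in its semiprime version, yields $[L, L] = \{0\}$, establishing (4). It then remains to show that a semiprime ring $R$ with $[L,L] = \{0\}$ is commutative. For this I would reduce to the prime case via the standard representation of a semiprime ring as a subdirect product of its prime quotients $R/P$: condition (4) descends to each $R/P$, and if every prime quotient is commutative then $[R,R] \subseteq P$ for every prime $P$, giving $[R,R] = \{0\}$ by semiprimeness.

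Assuming now that $R$ is prime with $\operatorname{char}(R) \neq 2$, the ring is $2$-torsion-free (a routine consequence of primeness), so \autoref{prp:CommutativitySemiprimeNo2Torsion} yields $L \subseteq Z(R)$. A classical manipulation---expanding $[[xa,b],b] = 0$ for $c := [a,b] \in Z$ to obtain $2[x,b]\cdot c = 0$, then invoking primeness---forces $c = 0$, so $R$ is commutative.

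The hard part will be the case $\operatorname{char}(R) = 2$, where the step from $[L,L]=\{0\}$ to $L \subseteq Z$ is not covered by \autoref{prp:CommutativitySemiprimeNo2Torsion}. Here I would appeal to the analysis of Lie ideals in prime rings of characteristic $2$ due to Lanski and Montgomery \cite{LanMon72LieStrPrimeChar2}: the only exceptional prime rings where a commutative Lie ideal can fail to be central are orders in $M_2(F)$, and these are ruled out by a direct computation showing that $M_2(F)$ does not satisfy $[[R,R],[R,R]] = \{0\}$ in any characteristic (e.g.\ $[[E_{12},E_{11}],[E_{21},E_{22}]]$ is a nonzero scalar multiple of $E_{11}-E_{22}$). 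Alternatively, since $[[x,y],[z,w]]$ is a multilinear polynomial identity of degree $4$, one could invoke Posner's theorem to embed $R$ into some $M_n(D)$ and verify that $n \geq 2$ is impossible in any characteristic, yielding commutativity uniformly.
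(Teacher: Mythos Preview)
Your proposal is correct and follows essentially the same strategy as the paper: reduce to the prime case via the subdirect representation, then invoke the Lanski--Montgomery structure theory \cite{LanMon72LieStrPrimeChar2} to force $[R,R]\subseteq Z(R)$ outside the characteristic-$2$ order-in-$M_2(F)$ exception, and rule that exception out by the explicit $M_2$ computation. The only organizational difference is that the paper applies \cite[Theorem~4]{LanMon72LieStrPrimeChar2} uniformly in both characteristics and then cites \cite[Lemma~7]{LanMon72LieStrPrimeChar2} for the step from $[R,R]\subseteq Z$ to commutativity, whereas you split off the $\operatorname{char}\neq 2$ case and handle it self-containedly via \autoref{prp:CommutativitySemiprimeNo2Torsion} plus the $2[x,b][a,b]=0$ manipulation (which is in effect the proof of that lemma). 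Your Posner/PI alternative---treating $[[x,y],[z,w]]$ as a multilinear identity, passing to the central simple quotient, and checking the identity fails in $M_n$ for $n\geq 2$---is a genuinely different and slightly slicker route that bypasses the characteristic case split entirely; the paper does not take this path.
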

\begin{proof}
It is clear that~(1) implies~(2), and that~(2) implies~(3).
By \autoref{prp:CommutativitySemiprimeNo2Torsion}, (3) implies~(4).

\medskip

Let us show that~(4) implies~(1).
To verify that $R$ is commutative, let $x,y \in R$.
We will show that $[x,y] = 0$.
Since $R$ is semiprime, the intersection of all of its prime ideals is $\{0\}$. Thus, to show that $[x,y] = 0$, it suffices to show that $[x,y]$ belongs to every
prime ideal of $R$.
Let $I \subseteq R$ be a prime ideal.
Then the quotient ring $Q := R/I$ is prime and satisfies $\big[[Q,Q],[Q,Q]\big]=\{0\}$.
In order to show that $[x,y]=0$, we will show that 
$Q$ is commutative. In other words, and this is 
indeed what we will do, we may assume that $R$ is 
\emph{prime}.
\vspace{.2cm}

Let $Z$ denote the center of $R$.
Using that $[R,R]$ is a Lie ideal of $R$ satisfying $[[R,R],[R,R]]=\{0\}$, it follows from \cite[Theorem~4]{LanMon72LieStrPrimeChar2} that $[R,R] \subseteq Z$, unless we have the following exceptional case:
$R$ has characteristic $2$, $Z\neq\{0\}$, and the localization $RZ^{-1}$ of $R$ at $Z$ is isomorphic to the ring 
$M_2(\mathbb{F})$ of $2$-by-$2$ matrices over the quotient field $\mathbb{F}$ of $Z$.
We claim that the exceptional case cannot happen under our assumptions. Arguing by contradiction, assume that it does.
Then $\mathbb{F}$ is a field of characteristic $2$.
The assumption $\big[[R,R],[R,R]\big]=\{0\}$ implies that $\big[[RZ^{-1},RZ^{-1}],[RZ^{-1},RZ^{-1}]\big]=\{0\}$ as well.
Set $S := M_2(\mathbb{F})$ and 
\[
M := \left\{
\begin{pmatrix}
e & f \\
g & e \\
\end{pmatrix} :
e,f,g \in \mathbb{F} \right\}.
\]
Since 
\[\left[\begin{pmatrix}
a & b \\
c & d \\
\end{pmatrix},\begin{pmatrix}
a' & b' \\
c' & d' \\
\end{pmatrix}\right]= 
\begin{pmatrix}
bc'-cb' & * \\
* & cb'-bc' \\
\end{pmatrix}
\]
and $\mathbb{F}$ has characteristic $2$, it follows that every commutator in $S$ belongs to $M$.
Further, for $e,f,g \in \mathbb{F}$, we have
\[
\left[ 
\begin{pmatrix}
e & 0 \\
g & 0 \\
\end{pmatrix}, 
\begin{pmatrix}
0 & 1 \\
0 & 0 \\
\end{pmatrix}
\right]
= \begin{pmatrix}
g & e \\
0 & g \\
\end{pmatrix}, \andSep
\left[ 
\begin{pmatrix}
e & f \\
0 & 0 \\
\end{pmatrix}, 
\begin{pmatrix}
0 & 0 \\
1 & 0 \\
\end{pmatrix}
\right]
= \begin{pmatrix}
f & 0 \\
e & f \\
\end{pmatrix}.
\]
Thus, $M$ is the commutator subgroup of $S$.
In particular, the matrices $\begin{psmallmatrix}
0 & 1 \\
0 & 0 \\
\end{psmallmatrix}$
and $\begin{psmallmatrix}
0 & 0 \\
1 & 0 \\
\end{psmallmatrix}$
belong to $M$, but their commutator does not vanish.
Thus, $[M,M]\neq\{0\}$.
This shows that $R$ is not of the exceptional case.

Thus, we have $[R,R] \subseteq Z$.
We conclude that $R\subseteq Z$ by \cite[Lemma~7]{LanMon72LieStrPrimeChar2}, and hence~$R$ is commutative.
\end{proof}

\begin{cor}
\label{prp:CommutatorIdealNil}
Let $R$ be a ring such that $\big[[R,R],[R,R]^2\big]=\{0\}$.
Then the commutator ideal of $R$ is nil.
\end{cor}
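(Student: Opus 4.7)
My plan is to reduce to the semiprime case already handled by \autoref{prp:CommutativitySemiprime} by passing to the quotient by the prime radical (lower nilradical), and then invoke the classical fact that the prime radical is a nil ideal.

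More concretely, let $N \subseteq R$ denote the intersection of all prime ideals of $R$. By construction $R/N$ is semiprime. The hypothesis $\bigl[[R,R],[R,R]^2\bigr]=\{0\}$ is preserved under ring homomorphisms, so it holds in $R/N$ as well. \autoref{prp:CommutativitySemiprime} therefore forces $R/N$ to be commutative, i.e.\ $[R,R] \subseteq N$. Since $N$ is a two-sided ideal, the ideal of $R$ generated by $[R,R]$ — which is by definition the commutator ideal — is contained in $N$.

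It remains to observe that every element of $N$ is nilpotent. This is the classical statement that the prime radical (Baer-McCoy radical) is a nil ideal; see, e.g., \cite[Theorem~10.19]{Lam01FirstCourse2ed}, which characterizes membership in $N$ via strong nilpotence and in particular implies nilpotence of each element. Combining this with the inclusion of the previous paragraph yields that every element of the commutator ideal is nilpotent, which is the desired conclusion.

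The only step that requires any thought is verifying that the descended hypothesis in $R/N$ plus semiprimeness really triggers \autoref{prp:CommutativitySemiprime}; but this is immediate since the condition is a polynomial identity that passes to quotients. The rest is bookkeeping plus the standard nilness of the prime radical, so I do not expect any genuine obstacle.
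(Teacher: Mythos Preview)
Your proof is correct and follows essentially the same route as the paper's own argument: pass to the quotient by the prime radical, apply \autoref{prp:CommutativitySemiprime} to conclude that the quotient is commutative, and then use the standard fact that the prime radical is nil. The only differences are cosmetic (you spell out that the hypothesis passes to quotients and cite a slightly different item in Lam).
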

\begin{proof}
Let $P$ denote the lower nilradical (or prime radical) of $R$.
Then $P$ is a nil ideal and $R/P$ is semiprime;
see \cite[Definition~10.13]{Lam01FirstCourse2ed}.
It follows from \autoref{prp:CommutativitySemiprime} that $R/P$ is commutative.
Hence, the commutator ideal of $R$ is contained in $P$ and therefore nil.
\end{proof}

\section{Rings generated by commutators}

In this section, we show that a unital ring is generated by its commutators as an ideal if and only if every element can be written as a sum of products of pairs of commutators.
Moreover, we show that there is a universal bound on the number of summands required;
see \autoref{prp:UnitalAlgGenCommut}.
In the general case of a not necessarily unital ring, we show that every element in the commutator ideal has a power that is a sum of products of pairs of commutators;
see \autoref{prp:NonunitalAlgGenCommut}.

\medskip

We actually work in the more general setting of algebras over 
rings, so we fix a unital commutative ring $K$ and will consider
$K$-algebras. The case of rings is obtained by specializing to 
$K=\ZZ$.

Given a subset $X\subseteq A$ in a $K$-algebra $A$ 
and $n \in \NN$, we use
\[
\sum{}^n X := \big\{ x_1 + x_2 + \ldots + x_n\in A\colon x_1,\ldots,x_n \in X \big\}
\]
to denote the collection of elements in $A$ that can be written as a sum of $n$ elements from $X$.

A \emph{Lie ideal} in a $K$-algebra $A$ is a $K$-linear subspace $L \subseteq A$ such that $[A,L] \subseteq L$.
Given Lie ideals $L,M \subseteq A$, it is easy to show, using the Jacobi identity, that the subspace $[L,M]$ is a Lie ideal as well.

Given subsets $X,Y \subseteq A$, recall from the beginning of \autoref{sec:Commutativity} that 
\[[X,Y]_1 := \{[x,y]\colon x \in X, y \in Y\} \ \ \mbox{ and } \ \ X \multNoSpan Y := \{xy\colon x \in X, y \in Y\}.\]
We further set $X^{\multNoSpan 2} := X \multNoSpan X$.

\begin{lma}
\label{prp:Ideal-LeftIdeal}
Let $L,M$ be Lie ideals in a $K$-algebra $A$.
Then
\[
A[L,M]A 
\ \subseteq \ A[L,M].
\]
Moreover, if $L_0 \subseteq L$ and $M_0 \subseteq M$ are subsets, 
and $m,n \in \NN$ satisfy 
\[
[A,L_0]_1 \subseteq \sum{}^n L_0, \andSep 
[A,M_0]_1 \subseteq \sum{}^m M_0,
\] 
then $A \multNoSpan [L_0,M_0]_1 \multNoSpan A \subseteq \sum^{1+n+m} A \multNoSpan [L_0,M_0]_1$.
\end{lma}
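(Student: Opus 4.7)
The plan is to reduce both assertions to a single identity expressing $a[\ell,m]b$ as a sum of three terms, each manifestly of the form (element of $A$)$\multNoSpan$(commutator from $[L,M]$ or $[L_0,M_0]_1$). I would start from the trivial rewriting $[\ell,m]b = b[\ell,m] + [[\ell,m],b]$ and then apply the Jacobi identity to obtain $[[\ell,m],b] = [[\ell,b],m] + [\ell,[m,b]]$. Multiplying on the left by $a$ gives
\[
a[\ell,m]b \ = \ ab\cdot[\ell,m] \ + \ a\cdot[[\ell,b],m] \ + \ a\cdot[\ell,[m,b]].
\]
Since $L$ and $M$ are Lie ideals, $[\ell,b]\in L$ and $[m,b]\in M$, so both $[[\ell,b],m]$ and $[\ell,[m,b]]$ lie in $[L,M]_1$. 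This exhibits $a[\ell,m]b$ as a sum of three elements of $A\multNoSpan[L,M]_1$, and passing to additive spans yields the first inclusion $A[L,M]A\subseteq A[L,M]$.

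For the quantitative statement I would feed the assumed bounds into the same identity. Fix $\ell\in L_0$, $m\in M_0$, and $a,b\in A$. By hypothesis $[b,\ell]\in[A,L_0]_1\subseteq\sum{}^n L_0$, so write $[b,\ell]=\ell_1+\cdots+\ell_n$ with $\ell_i\in L_0$, and likewise $[b,m]=m_1+\cdots+m_m$ with $m_j\in M_0$. Then $[\ell,b]=-\sum_{i=1}^n \ell_i$ and $[m,b]=-\sum_{j=1}^m m_j$, and substituting into the displayed identity gives
\[
a[\ell,m]b \ = \ (ab)\cdot[\ell,m] \ + \ \sum_{i=1}^n (-a)\cdot[\ell_i,m] \ + \ \sum_{j=1}^m (-a)\cdot[\ell,m_j],
\]
which presents $a[\ell,m]b$ as a sum of $1+n+m$ elements of $A\multNoSpan[L_0,M_0]_1$, as required.

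There is essentially no conceptual obstacle here: the whole proof hinges on spotting the Jacobi rearrangement $[[\ell,m],b]=[[\ell,b],m]+[\ell,[m,b]]$, after which both assertions are a matter of bookkeeping. The only subtle point is sign handling when unpacking the hypotheses $[A,L_0]_1\subseteq\sum^n L_0$ and $[A,M_0]_1\subseteq\sum^m M_0$, but since $A$ is closed under negation the coefficients $-a$ remain in $A$ and the summand count of $1+n+m$ is unaffected.
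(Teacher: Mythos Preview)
Your proof is correct and follows essentially the same approach as the paper: both reduce to the identity $a[\ell,m]b = ab[\ell,m] + a\big[[\ell,m],b\big]$ and then expand the second term via the Jacobi identity into two commutators with one entry in $L$ (respectively $L_0$) and the other in $M$ (respectively $M_0$). The paper writes the Jacobi rearrangement as $\big[[x,y],b\big] = \big[[b,y],x\big] + \big[[x,b],y\big]$ rather than your $\big[[\ell,b],m\big] + \big[\ell,[m,b]\big]$, but these are the same identity up to sign relabeling, and the bookkeeping for the $1+n+m$ count is identical.
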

\begin{proof}
We only prove the quantitative statement since the proof of the non-quan\-ti\-ta\-tive statement is analogous.
Let $a,b \in A$, let $x \in L_0$, and let $y \in M_0$.
It follows from the Jacobi identity that
\[
\big[[x,y],b\big]
= - \big[[y,b],x\big] - \big[[b,x],y\big]
= \big[[b,y],x\big] + \big[[x,b],y\big].
\]
and therefore
\begin{align*}
a[x,y]b
&= a\big[[x,y],b\big] + ab[x,y] \\
&= a\big[[b,y],x\big] + a\big[[x,b],y\big] + ab[x,y]
\in \sum{}^{1+n+m} A \multNoSpan [L_0,M_0]_1,
\end{align*}
as desired.
\end{proof}

It is a folklore result from Lie theory that if $L$ is a Lie ideal in an algebra, then the ideal generated by $[L,L]$ is contained in $L+L^2$;
see, for example \cite[Lemma~1.1]{Rob16LieIdeals}.
We include the result together with a quantitative statement for future reference.

\begin{lma}
\label{prp:Ideal-L-L}
Let $L$ be a Lie ideal in a $K$-algebra $A$.
Then
\[
A[L,L]A 
\ \subseteq \ A[L,L] 
\ \subseteq \ L + L^2.
\]
Moreover, if $L_0 \subseteq L$ and $n \in \NN$ satisfy $[A,L_0]_1,[L_0,A]_1 \subseteq \sum^n L_0$, then 
\[
A \multNoSpan [L_0,L_0]_1 
\subseteq \sum{}^{n} (L_0 + L_0^{\multNoSpan 2}), \andSep
A \multNoSpan [L_0,L_0]_1 \multNoSpan A 
\subseteq \sum{}^{n(1+2n)} (L_0 + L_0^{\multNoSpan 2}).
\]
\end{lma}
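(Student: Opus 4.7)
The approach is to reduce everything to the single algebraic identity
\[
a[x,y] = [ax,y] + [y,a]x,
\]
valid for all $a,x,y \in A$ and checked by expanding both sides. Once this is in hand, the first containment $A[L,L]A \subseteq A[L,L]$ is immediate from \autoref{prp:Ideal-LeftIdeal} applied in the special case $M=L$.

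For the containment $A[L,L] \subseteq L + L^2$, I would apply the identity pointwise. Given $a \in A$ and $x,y \in L$, the Lie ideal property yields $[ax,y] \in [A,L] \subseteq L$ and $[y,a] \in [L,A] \subseteq L$, so $[y,a]x \in L \cdot L \subseteq L^2$. Hence $a[x,y] \in L+L^2$, and $K$-linearity in $a$ and in the commutator $[x,y]$ extends this to all of $A[L,L]$.

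For the quantitative refinement, I would rerun the same identity while tracking summands. Given $a \in A$ and $x,y \in L_0$, the hypotheses $[A,L_0]_1,[L_0,A]_1 \subseteq \sum{}^n L_0$ give $[ax,y] \in \sum{}^n L_0$ and $[y,a] \in \sum{}^n L_0$, and therefore $[y,a]x \in \sum{}^n L_0^{\multNoSpan 2}$. Pairing corresponding summands on the two sides of the identity writes $a[x,y]$ as a sum of $n$ (rather than $2n$) elements of $L_0 + L_0^{\multNoSpan 2}$, giving the first quantitative bound. The second bound follows by applying \autoref{prp:Ideal-LeftIdeal} with $M_0 = L_0$ and $m = n$ to obtain $A \multNoSpan [L_0,L_0]_1 \multNoSpan A \subseteq \sum{}^{1+2n} A \multNoSpan [L_0,L_0]_1$, and then composing with the bound just established; multiplying the counts produces the stated $n(1+2n)$.

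There is no conceptual obstacle here beyond careful accounting; the content of the lemma is essentially the identity in the first display combined with the preceding lemma. The only minor subtlety is the pairing step in the count, where one must observe that $n$ summands drawn from $L_0$ together with $n$ summands drawn from $L_0^{\multNoSpan 2}$ combine into $n$ (not $2n$) summands drawn from the sumset $L_0 + L_0^{\multNoSpan 2}$; otherwise one would obtain a bound of $2n$ and $2n(1+2n)$ instead of the sharper $n$ and $n(1+2n)$.
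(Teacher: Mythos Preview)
Your proposal is correct and essentially identical to the paper's proof: both use the identity $a[x,y] = [ax,y] + [y,a]x$ to obtain the containment $A \multNoSpan [L_0,L_0]_1 \subseteq \sum^n (L_0 + L_0^{\multNoSpan 2})$ (with the same pairing observation you flag), and then invoke \autoref{prp:Ideal-LeftIdeal} with $M_0 = L_0$ and $m=n$ to obtain the factor $1+2n$ for the two-sided version.
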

\begin{proof}
We only prove the quantitative statement since the proof of the non-quan\-ti\-ta\-tive statement is analogous.
Given $a \in A$ and $x,y \in L_0$, we have
\[
a[x,y]
= [ax,y] + [y,a]x
\in [A,L_0]_1 + [L_0,A]_1 \multNoSpan L_0
\subseteq \sum{}^{n} (L_0+L_0^{\multNoSpan 2}).
\]

Applying \autoref{prp:Ideal-LeftIdeal} at the first step with $M_0 = L_0$ and $m=n$, we obtain
\[
A \multNoSpan [L_0,L_0]_1 \multNoSpan A 
\subseteq \sum{}^{1+2n} A \multNoSpan [L_0,L_0]_1
\subseteq \sum{}^{n(1+2n)} (L_0 + L_0^{\multNoSpan 2}),
\]
as desired.
\end{proof}

The next result is a variation of \autoref{prp:Ideal-L-L} that allows us to strengthen containment in $L+L^2$ to containment in $L^2$.

\begin{lma}
\label{prp:Ideal-L-L2}
Let $L$ be a Lie ideal in a $K$-algebra $A$ such that $[A,L^2] \subseteq L$.
(For example, this is automatically the case if $L$ contains $[A,A]$.)
Then:
\[
A[L,L^2]A 
\ \subseteq \ A[L,L^2] 
\ \subseteq \ L^2.
\]
Moreover, if $L_0 \subseteq L$ and $m,n \in \NN$ satisfy $[A,L_0]_1 \subseteq \sum^n L_0$ and $[L_0^{\multNoSpan 2},A]_1 \subseteq \sum^m L_0$, then
\[
A \multNoSpan [L_0,L_0^{\multNoSpan 2}]_1 
\subseteq \sum{}^{(2n+m)} L_0^{\multNoSpan 2}, \andSep
A \multNoSpan [L_0,L_0^{\multNoSpan 2}]_1 \multNoSpan A 
\subseteq \sum{}^{(2n+m)(1+3n)} L_0^{\multNoSpan 2}.
\]
\end{lma}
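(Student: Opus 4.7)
The plan is to prove both containments by explicit commutator identities, mirroring the strategies of \autoref{prp:Ideal-L-L} and \autoref{prp:Ideal-LeftIdeal}.

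For the inclusion $A[L,L^2] \subseteq L^2$, I would start from the identity
\[
a[x,yz] = [ax,yz] - [a,yz] x = [ax,y] z + y[ax,z] - [a,yz] x,
\]
valid for $a \in A$ and $x,y,z \in L$ (direct computation followed by the Leibniz rule). Each summand lies in $L^2$: the first two because $[ax,y], [ax,z] \in [A,L] \subseteq L$ by the Lie ideal property, and the third because $[a,yz] \in [A,L^2] \subseteq L$ by the standing hypothesis. For the quantitative statement, $[ax,y]$ and $[ax,z]$ each decompose into at most $n$ elements of $L_0$, while $[a,yz]$ yields at most $m$ such elements, giving $2n+m$ products in $L_0^{\multNoSpan 2}$.

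For the inclusion $A[L,L^2] A \subseteq A[L,L^2]$, I would mimic the Jacobi-based strategy of \autoref{prp:Ideal-LeftIdeal}: write
\[
a[x,yz] b = ab\,[x,yz] + a\,[[x,yz],b],
\]
so that it suffices to show $[[x,yz],b] \in [L,L^2]$. The Jacobi identity gives $[[x,yz],b] = [x,[yz,b]] + [yz,[b,x]]$. The second term lies in $[L^2,L] = [L,L^2]$ because $[b,x] \in [A,L] \subseteq L$. For the first term, the key step --- which I expect to be the main subtlety --- is that although the hypothesis only guarantees $[yz,b] \in L$, the Leibniz rule provides the sharper decomposition $[yz,b] = [y,b] z + y[z,b] \in L^2$, since $[y,b], [z,b] \in L$. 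Hence $[x,[yz,b]] = [x,[y,b] z] + [x,y[z,b]]$ is a sum of two commutators in $[L,L^2]$, as required.

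Counting summands for the quantitative bound on $A \multNoSpan [L_0, L_0^{\multNoSpan 2}]_1 \multNoSpan A$: the term $ab\,[x,yz]$ contributes one element of $A \multNoSpan [L_0, L_0^{\multNoSpan 2}]_1$, and each of $[b,x]$, $[y,b]$, $[z,b] \in \sum{}^n L_0$ expands its outer commutator into $n$ summands in $[L_0, L_0^{\multNoSpan 2}]_1$, for a total of $1+3n$. Combining with the first bound then yields $(2n+m)(1+3n)$ products in $L_0^{\multNoSpan 2}$, as claimed.
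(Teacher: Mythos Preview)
Your proof is correct and follows essentially the same route as the paper's. The paper obtains the $(1+3n)$ reduction by first checking $[A,L_0^{\multNoSpan 2}]_1 \subseteq \sum^{2n} L_0^{\multNoSpan 2}$ via Leibniz and then invoking \autoref{prp:Ideal-LeftIdeal} with $M_0=L_0^{\multNoSpan 2}$, whereas you unfold that Jacobi argument by hand; for the one-sided bound the paper uses $a[x,yz]=[ax,yz]+[yz,a]x$ and then Leibniz on the first term, which is exactly your identity $a[x,yz]=[ax,y]z+y[ax,z]-[a,yz]x$ reassembled, yielding the same $2n+m$ count.
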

\begin{proof}
Again we only prove the quantitative statement.
For $a \in A$ and $x,y \in L_0$, we have
\[
[a,xy] 
= [a,x]y + x[a,y] 
\in \Big(\sum{}^nL_0\Big) \multNoSpan L_0 + L_0 \multNoSpan \Big(\sum{}^nL_0\Big)
= \sum{}^{2n} L_0^{\multNoSpan 2}.
\]
Thus, setting $M_0 = L_0^{\multNoSpan 2}$, we get $[A,M_0]_1 \subseteq \sum^{2n} M_0$. 
Since $[A,L_0]_1 \subseteq \sum{}^{n} L_0$, it follows from \autoref{prp:Ideal-LeftIdeal} that 
\[
\tag{3.1} 
A \multNoSpan [L_0,L_0^{\multNoSpan 2}]_1 \multNoSpan A 
\subseteq \sum{}^{(1+3n)} A \multNoSpan [L_0,L_0^{\multNoSpan 2}]_1.
\]

Given $a \in A$ and $x,y,z \in L_0$, using that $[A,L_0^{\multNoSpan 2}]_1 \subseteq \sum^{2n} L_0^{\multNoSpan 2}$ and $[A,L_0^{\multNoSpan 2}]_1 \multNoSpan L_0 \subseteq \sum^m L_0^{\multNoSpan 2}$, we have
\[
a[x,yz]
= [ax,yz] + [yz,a]x
\in [A,L_0^{\multNoSpan 2}]_1 + [L_0^{\multNoSpan 2},A]_1 \multNoSpan L_0
\subseteq \sum{}^{(2n+m)} L_0^{\multNoSpan 2}.
\]
Thus $A \multNoSpan [L_0,L_0^{\multNoSpan 2}]_1 \subseteq \sum{}^{(2n+m)} L_0^{\multNoSpan 2}$. 
Using this at the second step, we get
\[
A \multNoSpan [L_0,L_0^{\multNoSpan 2}]_1 \multNoSpan A 
\stackrel{\mathrm{(3.1)}}{\subseteq} \sum{}^{(1+3n)} A \multNoSpan [L_0,L_0^{\multNoSpan 2}]_1
\subseteq \sum{}^{(2n+m)(1+3n)} L_0^{\multNoSpan 2},
\]
as desired.
\end{proof}

\begin{thm}
\label{prp:UnitalAlgGenCommut}
Let $A$ be a unital $K$-algebra.
Then the following are equivalent:
\begin{enumerate}
\item
We have $A = A[A,A]A$, that is, $A$ agrees with its commutator ideal.
\item
The algebra~$A$ is generated by its commutators as a $K$-algebra.
\item
There exists $N \in \NN$ such that for every $a \in A$ there exist $b_j,c_j,d_j,e_j \in A$ for $j = 1,\ldots,N$ such that
\[
a = \sum_{j=1}^N [b_j,c_j][d_j,e_j].
\]
\end{enumerate}
\end{thm}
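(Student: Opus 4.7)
The implications (3)$\Rightarrow$(2)$\Rightarrow$(1) are immediate: every sum of products of commutators lies in the $K$-subalgebra generated by commutators, and that subalgebra in turn lies inside the commutator ideal $A[A,A]A$. The content is (1)$\Rightarrow$(3), which I would prove by combining the quantitative \autoref{prp:Ideal-L-L2} with the commutativity consequence \autoref{prp:CommutatorIdealNil} from \autoref{sec:Commutativity}.

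Write $L := [A,A]$ and $L_0 := [A,A]_1$ (the \emph{set} of commutators). The key step, and the one where I expect the main obstacle, is to show that the two-sided ideal $J \subseteq A$ generated by $[L, L^2]$ equals $A$. For this, pass to the quotient $B := A/J$ and let $\bar L := [B,B]$. Since $[L, L^2] \subseteq J$, we have $[\bar L, \bar L^2] = \{0\}$ in $B$, so \autoref{prp:CommutatorIdealNil} tells us that the commutator ideal of $B$ is nil. On the other hand, (1) passes to the quotient as $B = B \bar L B$, so the commutator ideal of $B$ is all of $B$. Hence $B$ itself is nil, and since $B$ inherits a unit from $A$ this forces $1 = 0$ in $B$, whence $B = 0$ and $J = A$.

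With $J = A$ established, one may express $1 \in A$ as a finite sum
\[
1 = \sum_{k=1}^{M} a_k \big[[x_k,y_k],\, [z_k,w_k][u_k,v_k]\big] b_k,
\]
for some $M$ depending only on $A$; this is obtained by unfolding the definition of $J$ and using bilinearity of the bracket in each slot to reduce a general element of $[L, L^2]$ to a single term in $[L_0, L_0^{\multNoSpan 2}]_1$. For arbitrary $a \in A$, left-multiplying this expression by $a$ yields $a = a\cdot 1 \in \sum{}^{M} A \multNoSpan [L_0, L_0^{\multNoSpan 2}]_1 \multNoSpan A$. Finally I apply \autoref{prp:Ideal-L-L2} to $L_0 \subseteq L$ with parameters $n = m = 1$: these bounds hold because $[A, L_0]_1 \subseteq L_0$ and $[L_0^{\multNoSpan 2}, A]_1 \subseteq L_0$ (each such element is itself a single commutator in $A$), while the hypothesis $[A, L^2] \subseteq L$ is automatic since $L \supseteq [A,A]$. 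The resulting bound $(2n+m)(1+3n) = 12$ then gives $a \in \sum{}^{12M} L_0^{\multNoSpan 2}$, so (3) holds with $N = 12M$.
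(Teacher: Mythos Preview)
Your proof is correct and tracks the paper's argument closely: the same ideal $J$ generated by $[L,L^2]$, the same expression of $1$ as a finite sum in $A\multNoSpan[L_0,L_0^{\multNoSpan 2}]_1\multNoSpan A$, and the same application of \autoref{prp:Ideal-L-L2} with $m=n=1$ yielding the bound $12M$. The one genuine difference is in how you establish $J=A$. The paper argues by contradiction via a \emph{maximal} ideal containing $J$, so that the quotient is simple (hence prime), and then invokes \autoref{prp:CommutativitySemiprime} directly to force commutativity. You instead apply \autoref{prp:CommutatorIdealNil} to the full quotient $A/J$ and use that a unital nil ring is zero. Your route is a touch more economical on the surface, since it avoids the explicit Zorn step; of course \autoref{prp:CommutatorIdealNil} is itself proved via the prime radical and \autoref{prp:CommutativitySemiprime}, so the underlying machinery is the same.
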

\begin{proof}
It is clear that~(3) implies~(2), which in turn implies~(1).
To show that~(1) implies~(3), assume that $A = A[A,A]A$.
Set 
\[
L := [A,A], \andSep
I := A[L,L^2]A.
\] 

We first show that $I=A$.
To reach a contradiction, assume that $I \neq A$.
Using that $A$ is unital, choose a maximal ideal $J \subseteq A$ with $I \subseteq J$.
Set $B := A/J$, which is a simple and unital $K$-algebra, and therefore a prime ring.

We claim that $\big[[B,B],[B,B]^2\big]=\{0\}$. 
To see this, let $b_1,\ldots,b_6\in B$. 
We want to show that $\big[[b_1,b_2],[b_3,b_4][b_5,b_6]\big]=0$.
Let $\pi\colon A\to B$ denote the quotient map, and find $a_1,\ldots,a_6\in A$ such that $\pi(a_j)=b_j$ for all $j=1,\ldots,6$. 
Then $\big[[a_1,a_2],[a_3,a_4][a_5,a_6]\big]$ belongs to $I$ by definition (and since $A$ is unital), and thus 
\[
\big[[b_1,b_2],[b_3,b_4][b_5,b_6]\big]
= \pi\big(\big[[a_1,a_2],[a_3,a_4][a_5,a_6]\big]\big)=0,
\]
as desired.

Thus, the prime ring $B$ satisfies $\big[[B,B],[B,B]^2]=\{0\}$.
Applying \autoref{prp:CommutativitySemiprime}, we deduce that $B$ is commutative.
Since, by assumption, the algebra~$A$ is generated by its commutators as an ideal, the same is true for $B$. 
This is a contradiction, which shows that $I = A$.

Set $L_0 := [A,A]_1$.
It follows that $A$ is the linear span of $A \multNoSpan [L_0,L_0^2]_1 \multNoSpan A$, and we obtain $d \in \NN$ and $a_j,b_j \in A$ and $x_j, y_j, z_j\in L_0$, for $j=1,\ldots,d$, such that
\[
1 = \sum_{j=1}^d a_j[x_j,y_jz_j]b_j.
\]

We claim that every element in $A$ is a sum of at most $12d$ products of pairs of commutators.
To show this, note that $[A,L_0]_1 \subseteq L_0$ and $[A,L_0^2]_1 \subseteq L_0$. 
Applying \autoref{prp:Ideal-L-L2} with $m=n=1$ gives 
\[
\tag{3.2}
A \multNoSpan [L_0,L_0^2]_1 \multNoSpan A 
\subseteq \sum{}^{12} L_0^{\multNoSpan 2}.
\]
Let $a\in A$.
Then
\[
a = \sum_{j=1}^d aa_j[x_j,y_jz_j]b_j
\in \sum{}^d A \multNoSpan [L_0,L_0^{\multNoSpan 2}]_1 \multNoSpan A
\stackrel{\mathrm{(3.2)}}{\subseteq} \sum{}^{12d} L_0^{\multNoSpan 2}.
\]
This verifies~(3) for $N := 12d$, and finishes the proof.
\end{proof}

In Sections~\ref{sec:BoundsRing} and~\ref{sec:BoundsCAlg}, we will study the minimal $N$ for which~(3) in \autoref{prp:UnitalAlgGenCommut} is satisfied.

\medskip

We now turn to not necessarily unital rings and $K$-algebras.
In this case, we show that every element in the commutator ideal has a power that is a sum of products of pairs of commutators.

Given a non-unital $K$-algebra $A$, we let $\widetilde{A}$ denote the unitization, which is given by $\widetilde{A} = K \times A$ with coordinatewise addition and scalar multiplication, and multiplication defined as $(\lambda,x)(\mu,y) = (\lambda\mu,\mu x + \lambda y + xy)$ for $\lambda,\mu \in K$ and $x,y \in A$.
Then $\widetilde{A}$ is a unital $K$-algebra, with unit $(1,0)$, and the canonical map $A \to \widetilde{A}$ identifies $A$ with an ideal in $\widetilde{A}$.
If $A$ is unital, we set $\widetilde{A} := A$.
See \cite[Section~2.3]{Bre14BookIntroNCAlg} for details.
The minimal unitization of a \ca{} $A$ agrees with this construction when $A$ is viewed as a $\CC$-algebra.

Given a  $K$-algebra $A$, the ideal generated by a subspace $L$ is $L+AL+LA+ALA$, which agrees with $\widetilde{A}L\widetilde{A}$ viewed inside $\widetilde{A}$.
Note that $ALA$ is also an ideal in $A$, which, however, may not contain $L$ if $A$ is not unital.

\begin{thm}
\label{prp:NonunitalAlgGenCommut}
Let $A$ be a $K$-algebra and consider the ideals
\[
I := \widetilde{A}\big[[A,A],[A,A]^2\big]\widetilde{A} \andSep 
J := \widetilde{A}[A,A]\widetilde{A}.
\]

Then $I \subseteq [A,A]^2 \subseteq J$, and $J/I$ is nil.
Thus, for every $a,d \in \widetilde{A}$ and $b,c \in A$, there exist $n,m \geq 1$ and $w_j,x_j,y_j,z_j \in A$ for $j = 1,\ldots,n$ such that
\[
\big( a[b,c]d \big)^m = \sum_{j=1}^n [w_j,x_j][y_j,z_j].
\]

In particular, if $A = \widetilde{A}[A,A]\widetilde{A}$, then for every $a \in A$, there exist $n,m \geq 1$ and $b_j,c_j,d_j,e_j \in A$ for $j = 1,\ldots,n$, such that
\[
a^m = \sum_{j=1}^n [b_j,c_j][d_j,e_j].
\]
\end{thm}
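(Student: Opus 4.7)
My plan is to establish the two inclusions $I \subseteq [A,A]^2 \subseteq J$, then invoke \autoref{prp:CommutatorIdealNil} to deduce that $J/I$ is nil, and finally read off the desired power-sum representations from nilpotency modulo $I$.

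The inclusion $[A,A]^2 \subseteq J$ is immediate: since $\widetilde{A}$ is unital, $[A,A] \subseteq \widetilde{A}[A,A]\widetilde{A} = J$, and then $[A,A]^2 \subseteq J \cdot J \subseteq J$ because $J$ is a two-sided ideal of $\widetilde{A}$. For the more substantial inclusion $I \subseteq [A,A]^2$, I would apply \autoref{prp:Ideal-L-L2} to the unital $K$-algebra $\widetilde{A}$ with the Lie ideal $L := [A,A]$ of $\widetilde{A}$. The hypothesis $[\widetilde{A}, L^2] \subseteq L$ is automatic by the parenthetical remark in that lemma, since $L = [A,A] = [\widetilde{A},\widetilde{A}]$ (the unit of $\widetilde{A}$ is central). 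The conclusion $\widetilde{A}[L,L^2]\widetilde{A} \subseteq L^2$ is exactly $I \subseteq [A,A]^2$.

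Writing $\bar{A} := A/I$, the very definition of $I$ ensures that $\bigl[[\bar{A},\bar{A}],[\bar{A},\bar{A}]^2\bigr] = \{0\}$ in $\bar{A}$. Applying \autoref{prp:CommutatorIdealNil} to the (not necessarily unital) ring $\bar{A}$, the commutator ideal of $\bar{A}$ is nil. This commutator ideal is the image in $\bar{A}$ of $[A,A] + A[A,A] + [A,A]A + A[A,A]A = \widetilde{A}[A,A]\widetilde{A} = J$; in other words, it is precisely $J/I$. Hence $J/I$ is nil.

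For the power-sum formulas, given $a,d \in \widetilde{A}$ and $b,c \in A$, the element $a[b,c]d$ belongs to $J$ and lies in $A$ (since $[b,c] \in A$ and $A$ is an ideal in $\widetilde{A}$); nilness of $J/I$ provides $m \geq 1$ with $(a[b,c]d)^m \in I \subseteq [A,A]^2$, and the definition of $[A,A]^2$ as a span of products of pairs of commutators of elements of $A$ yields the representation $\sum_{j=1}^n [w_j,x_j][y_j,z_j]$ with all $w_j,x_j,y_j,z_j \in A$. The final ``in particular'' is obtained by specializing: the hypothesis $A = \widetilde{A}[A,A]\widetilde{A}$ says $J = A$, so every $a \in A$ has nilpotent image in $A/I$, and the same argument gives the claim. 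The only conceptual step is the first inclusion $I \subseteq [A,A]^2$; once this is settled, the rest is a direct translation of \autoref{prp:CommutatorIdealNil} into the present notation.
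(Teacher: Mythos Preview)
Your proof is correct and follows essentially the same approach as the paper: obtain $I \subseteq [A,A]^2$ from \autoref{prp:Ideal-L-L2} (applied in $\widetilde{A}$ with $L=[A,A]=[\widetilde{A},\widetilde{A}]$) and deduce nilness of $J/I$ from \autoref{prp:CommutatorIdealNil}. The only cosmetic difference is that you apply \autoref{prp:CommutatorIdealNil} to $A/I$ and then identify $J/I$ as its commutator ideal, whereas the paper applies it directly to $B:=J/I$; your variant is arguably a bit cleaner since it sidesteps verifying that $B$ is generated by its own commutators.
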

\begin{proof}
That $I$ is contained in $[A,A]^2$ follows from \autoref{prp:Ideal-L-L2}, while the inclusion $[A,A]^2 \subseteq J$ is clear.
Note that the quotient $B := J/I$ satisfies $\big[[B,B],[B,B]^2\big]=\{0\}$ and is generated by its commutators as an ideal.
It thus follows from \autoref{prp:CommutatorIdealNil} that $B$ is nil, as desired.
\end{proof}

\section{C*-algebras generated by commutators}
\label{sec:CAlgGenCommutator}

In this section, we prove that if a \ca{} is generated by its commutators as a (not necessarily closed) ideal, then every element is a sum of products of pairs of commutators;
see \autoref{prp:CommutatorsCAlg}.

\medskip

We use $\widetilde{A}$ to denote the minimal unitization of a \ca{} $A$;
see the comments before \autoref{prp:NonunitalAlgGenCommut}.
The (not necessarily closed) ideal generated by a subspace $L \subseteq A$ is $\widetilde{A}L\widetilde{A}$.
While in general it is not clear if $ALA = \widetilde{A}L\widetilde{A}$ since~$L$ may not be a subset of $ALA$, for the special case $L=[A,A]$ we showed in \cite[Proposition~3.2]{GarThi23arX:PrimeIdealsCAlg} that $[A,A] \subseteq A[A,A]A$ and therefore $A[A,A]A = \widetilde{A}[A,A]\widetilde{A}$.

A \emph{character} on a \ca{} $A$ is a one-dimensional, irreducible representation. (Equivalently, a nonzero homomorphism $A\to \mathbb{C}$.)

\begin{thm}
\label{prp:CommutatorsCAlg}
Let $A$ be a \ca.
Then the following are equivalent:
\begin{enumerate}
\item
We have $A=\widetilde{A}[A,A]\widetilde{A}$, that is, $A$ is generated by its commutators as a (not necessarily closed) ideal.
\item
We have $A=A[A,A]A$, that is, for every $a \in A$ there exist $n \in \NN$ and $b_j,c_j,d_j,e_j \in A$, for $j=1,\ldots,n$, such that
\[
a = \sum_{j=1}^n b_j[c_j,d_j]e_j.
\]
\item
We have $A=A\big[[A.A],[A.A]\big]A$.
\item
The \ca{} $A$ is generated by its commutators as a ring.
\item
We have $A=[A,A]^2$, that is, for every $a \in A$ there exist $n \in \NN$ and  $b_j,c_j,d_j,e_j \in A$, for $j=1,\ldots,n$, such that
\[
a = \sum_{j=1}^n [b_j,c_j][d_j,e_j].
\]
\end{enumerate}
Moreover, these conditions imply:
\begin{enumerate}
\setcounter{enumi}{5}
\item
The \ca{} $A$ admits no character.
\end{enumerate}
Moreover, (6) implies that $A[A,A]A$ is dense in $A$,
and thus all conditions are equivalent if $A$ is unital.
\end{thm}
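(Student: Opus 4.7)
The plan is to settle (1) $\Leftrightarrow$ (2) from the cited inclusion $[A,A] \subseteq A[A,A]A$ of \cite[Proposition~3.2]{GarThi23arX:PrimeIdealsCAlg}, which identifies $\widetilde{A}[A,A]\widetilde{A}$ with $A[A,A]A$. The equivalence cycle then closes via the easy implications (5) $\Rightarrow$ (4) $\Rightarrow$ (1) (products of pairs of commutators lie in the subring generated by commutators, which lies in the ideal they generate as a routine consequence of $[A,A]$ being absorbed into $A[A,A]A$) and (3) $\Rightarrow$ (2) (using $[[A,A],[A,A]] \subseteq [A,A]$).

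The main implication is (1) $\Rightarrow$ (5). I would apply \autoref{prp:NonunitalAlgGenCommut} with $J := \widetilde{A}[A,A]\widetilde{A}$ and $I := \widetilde{A}[[A,A],[A,A]^2]\widetilde{A}$: it gives $I \subseteq [A,A]^2$ and $J/I$ nil. Since (1) says $J = A$, every $a \in A$ satisfies $a^m \in I$ for some $m \geq 1$. At this point the purely ring-theoretic argument halts, and the C*-algebra structure enters through \cite[Proposition~2.2]{GarThi23arX:PrimeIdealsCAlg}: a \ca{} cannot be a radical extension over a proper (not necessarily closed) two-sided ideal. This forces $I = A$, and the sandwich $A = I \subseteq [A,A]^2 \subseteq A$ delivers (5). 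For (1) $\Rightarrow$ (3) I would run a parallel nil-argument centered on $[[A,A],[A,A]]$: by Leibniz, $[[A,A],[A,A]^2] \subseteq [[A,A],[A,A]]\cdot[A,A] + [A,A]\cdot[[A,A],[A,A]]$, so $I$ sits inside the $\widetilde{A}$-ideal generated by $[[A,A],[A,A]]$, and the same radical-extension argument forces this ideal to equal $A$. The main technical obstacle is descending from this $\widetilde{A}$-ideal to the genuine $A$-ideal $A[[A,A],[A,A]]A$ required by (3); this reduces to an analog of \cite[Proposition~3.2]{GarThi23arX:PrimeIdealsCAlg} establishing $[[A,A],[A,A]] \subseteq A[[A,A],[A,A]]A$, which I expect to handle via the factorization techniques available in \ca{s}.

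Finally, for (6): any character $\chi\colon A\to\CC$ vanishes on commutators and hence on the ideal they generate; by (1) this ideal is $A$, forcing $\chi=0$ and contradicting that $\chi$ is a character. Conversely, assume (6). Since $[A,A] \subseteq A[A,A]A \subseteq \overline{A[A,A]A}$, the quotient $A/\overline{A[A,A]A}$ is a commutative \ca{}; if nonzero it would admit a character by Gelfand duality, which would pull back to a character on $A$ and contradict (6). Therefore $\overline{A[A,A]A} = A$, proving density. In the unital case, density of the algebraic ideal $A[A,A]A$ combined with the fact that the invertible elements of a unital Banach algebra form an open neighborhood of $1$ produces an invertible $x \in A[A,A]A$; since $A[A,A]A$ is a two-sided ideal, $1 = xx^{-1} \in A[A,A]A$, whence $A[A,A]A = A$, which is condition (1) in the unital setting.
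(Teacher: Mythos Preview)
Your argument is essentially correct and mirrors the paper's proof closely on the substantive points: the implication (1)$\Rightarrow$(5) via \autoref{prp:NonunitalAlgGenCommut} and \cite[Proposition~2.2]{GarThi23arX:PrimeIdealsCAlg}, the treatment of~(6), and the unital case all match the paper (the paper phrases the last step as ``the only dense ideal in a unital \ca{} is the \ca{} itself'', which is your invertibles-are-open observation).

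The one divergence is the equivalence of (1), (2), (3). The paper does not argue this at all here; it simply invokes \cite[Theorem~3.3]{GarThi23arX:PrimeIdealsCAlg}, which already packages (1)$\Leftrightarrow$(2)$\Leftrightarrow$(3). You instead reconstruct (1)$\Leftrightarrow$(2) from \cite[Proposition~3.2]{GarThi23arX:PrimeIdealsCAlg} and get (3)$\Rightarrow$(2) for free, but then need (1)$\Rightarrow$(3) separately. Your proposed route---showing $\widetilde{A}\big[[A,A],[A,A]\big]\widetilde{A}=A$ via the same nil/radical-extension mechanism and then descending to $A\big[[A,A],[A,A]\big]A$---is sound in spirit, and you correctly isolate the missing ingredient as the inclusion $\big[[A,A],[A,A]\big]\subseteq A\big[[A,A],[A,A]\big]A$. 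That inclusion is precisely part of what \cite[Theorem~3.3]{GarThi23arX:PrimeIdealsCAlg} supplies, so rather than re-proving it via factorization you can (and the paper does) just cite it. In short: no genuine gap, but you are working harder than necessary on (3); the cleaner move is to quote Theorem~3.3 outright.
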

\begin{proof}
By \cite[Theorem~3.3]{GarThi23arX:PrimeIdealsCAlg}, (1), (2) and~(3) are equivalent.
It is clear that~(5) implies~(4), which in turn implies~(1).

Let us show that~(1) implies~(5).
Thus, assuming that $A = \widetilde{A}[A,A]\widetilde{A}$, consider the ideal
\[
I := \widetilde{A}\big[[A,A],[A,A]^2\big]\widetilde{A}.
\]
Applying \autoref{prp:NonunitalAlgGenCommut}, we obtain that
\[
I \subseteq [A,A]^2 \subseteq A,
\]
and $A/I$ is nilradical.
Thus~$A$ is a radical extension over $I$.
Since \ca{s} are never radical extension over proper ideals (\cite[Proposition~2.2]{GarThi23arX:PrimeIdealsCAlg}), we get $I = A$, and hence $A = [A,A]^2$, as desired.

We have seen that~(1)-(5) are equivalent.
It is easy to see that every character vanishes on commutators, which shows that~(4) implies~(6).

We will now show that (6) implies $\overline{A[A,A]A}=A$. 
To reach a contradiction, assume that $J := \overline{A[A,A]A}$ is a proper subset of~$A$, and 
thus a proper (closed, two-sided) ideal. 
The quotient $A/J$ is a commutative \ca{} and therefore admits a character.
Composed with the quotient map $A \to A/J$, we obtain a character for~$A$, which is the desired contradiction.

The last claim follows from the fact that the only dense ideal in a unital \ca{} is the \ca{} itself, and hence (6) implies (2) in this case.
\end{proof}

\begin{rmks}
\label{rmk:ReplaceCommutatorsByTheirProducts}
(1)
It was shown in \cite[Lemma~2.5]{BreFosFos05JordanIdlsRevisited} that a unital \ca{} is generated by its commutators as an ideal if and only if it is generated by its commutators as an algebra, and if and only if it admits no characters.
\autoref{prp:CommutatorsCAlg} refines and generalizes this result.

(2)
Let $A$ be a unital \ca{} that admits no characters.
Robert showed in \cite[Theorem~3.2]{Rob16LieIdeals} that every element in $A$ can be written as
\[
\sum_{j=1}^N [a_j,b_j] + \sum_{j=1}^N [c_j,d_j][e_j,f_j]
\]
for some $N \in \NN$ and elements $a_j,b_j,c_j,d_j,e_j,f_j \in A$,
for $j=1,\ldots,N$.
\autoref{prp:CommutatorsCAlg} shows that the summands $[a_j,b_j]$ are not necessary, and every element can be written as a sum of products of commutators.

It is not known if every every commutator in a \ca{} is a linear combination of products of pairs of commutators;
see \autoref{qst:Commutators}(c).
\end{rmks}

\begin{qst}
\label{qst:BoundNonunitalCAlg}
Let $A$ be a nonunital \ca{} that is generated by its commutators as an ideal.
Is there a uniform upper bound on the number of summands necessary in statement~(3) in \autoref{prp:CommutatorsCAlg}?
\end{qst}

The following example was provided by Ozawa, who kindly allowed us to include it here.
It shows that~(6) does not imply (1)-(5) in \autoref{prp:CommutatorsCAlg} for nonunital \ca{s}. 
In the example, we use the \ca{ic} product of \ca{s} $A_n$ for $n\in\NN$, which is defined as 
\[
\prod_{n\in\NN}A_n=\Big\{(a_n)_{n\in\NN}\colon a_n\in A_n, \sup_{n\in\NN}\|a_n\|<\infty\Big\}.
\]

\begin{exa}
\label{exa:NoCharNotGenCommutators}
By \cite[Corollary~8.6]{RobRor13Divisibility}, there exists a sequence $(A_n)_{n\in\NN}$ of unital, simple, infinite-dimensional \ca{s} $A_n$ such that the product $\prod_{n\in\NN} A_n$ has a character.
We claim that for every $m \geq 1$ and $C > 1$, there exists $n = n(m,C)$ such that in $A_n$ the following holds:
If $1 = \sum_{j=1}^m a_j[b_j,c_j]d_j$ for $a_j,b_j,c_j,d_j \in A_n$, then $\sum_{j=1}^m \|a_j\|\|b_j\|\|c_j\|\|d_j\| > C$.

Indeed, if this is not the case, then there exist $m \geq 1$ and $C > 1$ such that for every $n \in \NN$ there are $a_{n,j},b_{n,j},c_{n,j},d_{n,j} \in A_n$ such that
\[
1= \sum_{j=1}^m a_{n,j}[b_{n,j},c_{n,j}]d_{n,j}, \andSep
\sum_{j=1}^m \|a_{n,j}\|\|b_{n,j}\|\|c_{n,j}\|\|d_{n,j}\| \leq C.
\]
One may arrange that $\| a_{n,j} \| = \| b_{n,j} \| = \| c_{n,j} \| = \| d_{n,j} \|$ for each $n$ and $j$.
It follows that 
\[
a_j := (a_{n,j})_n, \quad
b_j := (b_{n,j})_n, \quad
c_j := (c_{n,j})_n, \andSep
d_j := (d_{n,j})_n
\]
are bounded sequences and thus belong to $\prod_{n\in\NN} A_n$.
Then $1 = \sum_{j=1}^m a_j[b_j,c_j]d_j$ in $\prod_{n\in\NN} A_n$, which contradicts that $\prod_{n\in\NN} A_n$ has a  one-dimensional irreducible representation.

For $m\in\NN$, consider the natural number $n(m,m^3)$ as in the claim for $m=m$ and $C=m^3$.
Let us consider the direct sum $B := \bigoplus_m A_{n(m,m^3)}$.
Since each summand $A_{n(m,m^3)}$ has no character, neither does $B$.
On the other hand, we claim that~$B$ does not agree with its commutator ideal.
To see this, assume that the element $e := (\tfrac{1}{m})_{m\in\NN}$ belongs to the commutator ideal.
Then there exist $r\in\NN$ and sequences 
$a_k=(a_{m,k})_{m\in\NN}, b_k=(b_{m,k})_{m\in\NN}, c_k=(c_{m,k})_{m\in\NN}, d_k=(d_{m,k})_{m\in\NN} \in B$ with $e = \sum_{k=1}^r a_k[b_k,c_k]d_k$.
Choose $m$ such that $m \geq r$ and $m^2 \geq \sum_{k=1}^r \|a_k\| \|b_k\| \|c_k\| \|d_k\|$.
In the quotient $A_{n(m,m^3)}$ of $B$, we have
\[
\tfrac{1}{m} = \sum_{k=1}^r a_{m,k}[b_{m,k},c_{m,k}]d_{m,k}, \andSep
\sum_{k=1}^r \|a_{m,k}\| \|b_{m,k}\| \|c_{m,k}\| \|d_{m,k}\| \leq m^2,
\]
a contradiction to the choice of $n(m,m^3)$.
\end{exa}

Given a \ca{} $A$, we use $A_0$ to denote the set of self-adjoint elements in $A$ that vanish under every tracial state on $A$.
A \emph{self-commutator} in $A$ is an element of the form $[x^*,x]$ for some $x \in A$.
Clearly, every self-commutator belongs to $A_0$, and consequently every norm-convergent sum $\sum_{j=1}^\infty [x_j^*,x_j]$ belongs to $A_0$.
It was shown by Cuntz and Pedersen, \cite{CunPed79EquivTraces}, that all elements of $A_0$ arise this way.

Thus, every element in $A_0$ can be approximated in norm by finite sums of self-commutators.
However, the following example shows that there is no uniform bound on the number of summands needed -- even if $A$ is unital.
This should be contrasted with \autoref{prp:UnitalAlgGenCommut}.

\begin{exa}
Based on examples by Pedersen and Petersen \cite{PedPet70IdealsCAlg}, Bice and Farah showed in \cite[Theorem~2.1]{BicFar15Trace} that for each $m$ there exist a unital \ca{} $B$ and an element $b \in B_0$ that is not a limit of sums of $m$ self-commutators. 

Robert showed that this phenomenon can even be accomplished in a simple \ca{} for all $m$ simultaneously:
By \cite[Theorem~1.4]{Rob15NucDimSumsCommutators}, there exists a simple, separable, unital \ca{}~$A$ with a unique tracial state $\tau$ that contains contractive elements $a_m \in A_0$, for $m \geq 1$, satisfying $\big\| a_m - \sum_{j=1}^m [x_j^*,x_j]\big \| \geq 1$ for all $x_1,\ldots,x_m \in A$.

Let us see that $\dist(a_{4m}, \sum^{m}[A,A]_1)=1$.
Since $a_{4m}$ is contractive, it suffices to show that $\dist\big(a_{4m}, \sum^{m}[A,A]_1\big) \geq 1$.
To reach a contradiction, assume that there is $c \in \sum^m  [A,A]_1$ with $\| a_{4m} - c \| < 1$.
Then $\big\| a_{4m} - \tfrac{1}{2}(c+c^*) \big\| < 1$.
Note that $\tfrac{1}{2}(c+c^*)$ is a self-adjoint element in $\sum^{2m}[A,A]_1$.
It follows from \cite[Theorem~2.4]{Mar06SmallNrCommutators} (and its proof), that every self-adjoint element in $\sum^k [A,A]_1$ is a sum of at most $2k$ self-commutators, for every $k \in \NN$.
Thus, $a_{4m}$ has distance strictly less than $1$ to a sum of at most $4m$ self-commutators, which is the desired contradiction.

It follows that $A_0$ is not contained in the closure of $\sum^m [A,A]_1$ for any $m \in \NN$.
On the other hand, since $A$ is unital, by \autoref{prp:UnitalAlgGenCommut} there exists $N\in\NN$ such that $A = \sum^N [A,A]_1^{\multNoSpan 2}$.
\end{exa}

\section{Bounds on the number of commutators for rings}
\label{sec:BoundsRing}

In \autoref{prp:UnitalAlgGenCommut} we have seen that if a unital ring $R$ is generated by its (additive) commutators, then there exists $N \in \NN$ such that $R = \sum^N [R,R]_1^{\multNoSpan 2}$, where $[R,R]_1$ denotes the set of commutators in $R$, and $[R,R]_1^{\multNoSpan 2}$ denotes the set of products of pairs of commutators.
We define the invariant $\xi(R)$ as the minimal $N$ with this property;
see \autoref{dfn:Xi}.
We show that $\xi(M_n(S)) \leq 2$ for the ring $M_n(S)$ of $n$-by-$n$ matrices over an arbitrary untial ring $S$ and $n \geq 2$;
see \autoref{prp:MatrixSizeN}.
For matrix algebras over a field $\mathbb{F}$, we prove that $\xi(M_n(\mathbb{F})) = 1$, which amounts to showing that every matrix is a product of two matrices of trace zero;
see \autoref{exa:MatrixAlgebra}.

We then show that every noncommutative division ring $D$ satisfies $\xi(D) \leq 2$ (\autoref{exa:DivisionRing}), and deduce that $\xi(R) \leq 2$ for every semisimple, Artinian ring $R$ that has no commutative summand (\autoref{exa:Semisimple}). 

In \autoref{prp:EstimateFine}, we develop a method to obtain good bounds for $\xi(R)$ for a unital ring $R$ from the existence of special elements in a unital subring of $R$.
Using this, we prove that $\xi(R) \leq 3$ as soon as $R$ admits a unital ring homomorphism from a direct sum of matrix rings;
see \autoref{prp:XiMatrixSubalgebra}.

\begin{dfn}
\label{dfn:Xi}
Given a unital ring $R$ generated by its commutators, we set
\[
\xi(R) := \min \left\{ N \in \NN : R = \sum{}^N [R,R]_1^{\multNoSpan 2} \right\}.
\]
\end{dfn}

\begin{rmks}
(1)
In \cite{Pop02FiniteSumsCommutators}, Pop shows that a unital \ca{} $A$ admits no tracial states if and only if there exists $n$ such that $A = \sum^n [A,A]_1$, and in this case he defines the invariant $\nu(A)$ as the minimal number $n$ such that $A = \sum^n [A,A]_1$.
Our invariant $\xi$ is defined analogously.

(2)
For a $K$-algebra $A$ over a unital, commutative ring $K$, it would arguably be more natural to define $\xi(A)$ as the smallest $N$ such that every element of $A$ is a \emph{$K$-linear} combination of (at most) $N$ elements in $[A,A]_1^{\multNoSpan 2}$.  
Since the set $[A,A]_1^{\multNoSpan 2}$ is $K$-invariant, this
alternative definition agrees with the one given above.

(3)
In \cite[Example~3.11]{Rob16LieIdeals}, Robert shows that for every $N \in \NN$ there exists a unital \ca{} $A$ such that not every element of $A$ can be expressed as a sum of~$N$ elements in $[A,A]_1^{\multNoSpan 2}$, that is, such that $\xi(A) > N$.
\end{rmks}

\begin{qst}
What are the possible values $\xi(R)$ for unital rings $R$?
What are the possible values for unital, simple rings?
\end{qst}

The rest of this work is concerned with finding upper bounds for $\xi(R)$ for different classes of rings $R$.
We begin with matrix rings.
In \cite[Theorem~15]{Mes06CommutatorRings}, Mesyan showed that for every unital ring $S$ and $n \geq 2$, every matrix in $M_n(S)$ with trace zero is a sum of two commutators.
Based on his ideas, we show that every matrix in $M_n(S)$,
regardless of its trace, is a sum of two products of pairs of commutators.

\begin{thm}
\label{prp:MatrixSizeN}
Let $S$ be a unital ring and let $n \geq 2$.
Then $\xi(M_n(S)) \leq 2$.
\end{thm}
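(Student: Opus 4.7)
My plan is to exhibit, for any matrix $A = (a_{ij}) \in M_n(S)$, an explicit decomposition of $A$ as a sum of two products of pairs of commutators. The basic building blocks are that for $i \neq j$ and $s \in S$ the element $s e_{ij}$ is a commutator since $s e_{ij} = [s e_{ii}, e_{ij}]$, and that $s e_{ii} = (s e_{ij}) \cdot e_{ji}$ expresses each diagonal scalar as a product of two commutators. This latter observation is what allows the decomposition to handle matrices of any trace using products, in contrast to Mesyan's purely additive decomposition of trace-zero matrices as sums of commutators.

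For the base case $n = 2$, I would verify by a direct matrix computation the identity
\[
A = [e_{11}, e_{12}] \cdot [e_{21}, a_{11} e_{11} + a_{12} e_{12}] + [e_{22}, e_{21}] \cdot [e_{12}, a_{21} e_{21} + a_{22} e_{22}].
\]
The first summand evaluates to the matrix supported on the first row with entries $(a_{11}, a_{12})$, and the second summand symmetrically to the matrix supported on the second row; their sum is $A$.

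For $n \geq 3$, I would split $A = A' + A''$, where $A''$ is supported in the last row (handled as in the $n = 2$ case with $e_{n, n-1}$ replacing $e_{21}$) and $A'$ is supported in rows $1, \ldots, n - 1$. The piece $A'$ is to be realized as a product $C \cdot M$, where $C = e_{12} + e_{23} + \cdots + e_{n-1, n}$ is itself a commutator via $C = [X, -C]$ for $X = \sum_{i=1}^n i \cdot e_{ii}$ (which only uses that $-1 \in S$ is a unit), and $M$ is a single commutator of the form $[e_{ab}, Y]$ whose row structure matches the first $n - 1$ rows of $A$. The main obstacle is ensuring that $M$ can indeed be chosen as a \emph{single} commutator in $M_n(S)$: for a general unital ring $S$, not every trace-zero matrix is a single commutator, so $M$ must be tailored carefully in the spirit of Mesyan's argument from \cite[Theorem~15]{Mes06CommutatorRings}. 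For the even case $n = 2k$, this difficulty can alternatively be bypassed via the ring isomorphism $M_{2k}(S) \cong M_2(M_k(S))$ and the base case applied to the coefficient ring $M_k(S)$.
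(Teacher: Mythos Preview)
Your $n=2$ identity is correct: the two summands recover the first and second row of $A$, respectively, so $\xi(M_2(S))\leq 2$. This is a different (row-based) decomposition than the paper's, which instead writes $A$ as a sum of the two products
\[
\begin{pmatrix}1&0\\0&-1\end{pmatrix}\begin{pmatrix}0&b\\-c&0\end{pmatrix}
\ +\
\begin{pmatrix}0&1\\1&0\end{pmatrix}\begin{pmatrix}0&d\\a&0\end{pmatrix},
\]
but both arguments are equally short. Your reduction $M_{2k}(S)\cong M_2(M_k(S))$ then correctly settles all even $n$.

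The gap is the odd case $n\geq 3$. Your plan writes $A'=C\cdot M$ with $C=\sum_{i=1}^{n-1}e_{i,i+1}$ a commutator, which forces $M$ to have rows $2,\ldots,n$ equal to rows $1,\ldots,n-1$ of $A$, with only row~$1$ free. You then propose $M=[e_{ab},Y]$; but $e_{ab}Y$ is supported in row~$a$ and $Ye_{ab}$ in column~$b$, so $[e_{ab},Y]$ has at most one nonzero row and one nonzero column. Such a matrix cannot match $n-1\geq 2$ arbitrarily prescribed rows. More seriously, even allowing an arbitrary commutator, it is not clear that a suitable choice of row~$1$ always makes $M$ a single commutator over a general unital ring~$S$; the Rosset--Rosset phenomenon shows one cannot expect this to be automatic, and ``tailoring in the spirit of Mesyan'' is not an argument.

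The paper's route for $n\geq 3$ avoids this obstacle by working with columns and using Mesyan's construction \emph{explicitly}: with $x=\sum e_{j+1,j}$, $y=\sum e_{j,j+1}$ and $c=a+xay+\cdots+x^{n-1}ay^{n-1}$ one has $[cy,x]=a-ce_{n,n}$, a single commutator agreeing with $a$ on the first $n-1$ columns. Multiplying on the right by the diagonal commutator $d=\mathrm{diag}(1,-1,\ldots,0)$ (one trailing zero if $n$ is odd, two if $n$ is even) kills the bad column(s), so $ad=[cy,x]\cdot d$ realises an \emph{arbitrary} matrix with the last one or two columns zero as a product of two commutators. A symmetric argument handles matrices with the first one or two columns zero, and for $n\geq 3$ every matrix is a sum of two such. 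This is the missing idea you would need to complete the odd case; your row-splitting $A=A'+A''$ can be made to work, but only once you have a concrete single-commutator representative like $[cy,x]$ on the $A'$ side, not a hypothetical $[e_{ab},Y]$.
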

\begin{proof}
Assume first that $n=2$. 
Given $x,y \in S$, we have
\[
\left[
\begin{pmatrix}
1 & 0 \\
0 & 0
\end{pmatrix},
\begin{pmatrix}
0 & x \\
-y & 0
\end{pmatrix}
\right]
=
\begin{pmatrix}
0 & x \\
y & 0
\end{pmatrix}
\in [M_2(S),M_2(S)]_1.
\]

Further, we have
\begin{align}
\label{eq:Diagonal1Minus1}\tag{5.1}
\left[
\begin{pmatrix}
0 & 1 \\
0 & 0
\end{pmatrix},
\begin{pmatrix}
0 & 0 \\
1 & 0
\end{pmatrix}
\right]
=
\begin{pmatrix}
1 & 0 \\
0 & -1
\end{pmatrix}
\in [M_2(S),M_2(S)]_1.
\end{align}

Therefore, given any $a,b,c,d \in S$, we have
\[
\begin{pmatrix}
a & b \\
c & d
\end{pmatrix}
=
\begin{pmatrix}
1 & 0 \\
0 & -1
\end{pmatrix}
\begin{pmatrix}
0 & b \\
-c & 0
\end{pmatrix}
+
\begin{pmatrix}
0 & 1 \\
1 & 0
\end{pmatrix}
\begin{pmatrix}
0 & d \\
a & 0
\end{pmatrix}
\in \sum{}^2 [M_2(S),M_2(S)]_1^{\multNoSpan 2}.
\]
\vspace{.2cm}

Assume now that $n \geq 3$.
For $j,k=1,\ldots,n$, let let $e_{j,k} \in M_n(\{0,1\})\subseteq M_n(S)$ denote the standard matrix unit.
Let $a \in M_n(S)$ and set 
\[
x := \sum_{j=1}^{n-1} e_{j+1,j}, \quad
y := \sum_{j=1}^{n-1} e_{j,j+1}, \andSep
c := a + xay + \ldots + x^{n-1}ay^{n-1}.
\]
As shown in the proof of \cite[Theorem~15]{Mes06CommutatorRings}, one can readily check that
\[
[cy,x] = a - ce_{n,n}.
\]
Note that the entries of the matrix $a - ce_{n,n}$ agree with those of $a$ in the first $n-1$ columns.

Using block-diagonal matrices with blocks 
$\begin{psmallmatrix}
0 & 1 \\
0 & 0
\end{psmallmatrix}$
and
$\begin{psmallmatrix}
0 & 0 \\
1 & 0
\end{psmallmatrix}$ 
and using the identity in \eqref{eq:Diagonal1Minus1}, we see that the diagonal matrix $d$ with entries $1,-1,\ldots,1,-1,0$ (for $n$ odd) and with entries $1,-1,\ldots,1,-1,0,0$ (for $n$ even) is a commutator in $M_n(S)$.
Moreover, we have $[cy,x]d = ad$, so $ad$ is a product of two 
commutators. Set $p_n=1$ if $n$ is odd and $p_n=2$ if $n$ is even.
Note that $ad$ is an arbitrary matrix in $M_n(S)$ whose last $p_n$-many columns are zero.

We have shown that every matrix whose last $p_n$-many columns are zero is a product of two commutators in $M_n(S)$.
Analogously, every matrix whose first $p_n$-many columns are zero is also a product of two commutators.
Since $n \geq 3$, every matrix is a sum of two such matrices, thus 
concluding the proof.
\end{proof}

For matrix algebras over a field $\mathbb{F}$, we have the optimal estimate $\xi(M_n(\mathbb{F})) = 1$.

\begin{exa}
\label{exa:MatrixAlgebra}
Let $\mathbb{F}$ be a field, and let $n \geq 2$.
For characteristic $0$, it was shown by Wu in \cite[Theorem~5.12]{Wu89OpFactorization} that every matrix in $M_n(\mathbb{F})$ is a product of two (additive) commutators.
Botha showed in \cite[Theorem~4.1]{Bot97ProdMatPrescribedTraces} that the same holds in positive characteristic.

Thus, we have $\xi(M_n(\mathbb{F})) = 1$. 
This means that the image of the polynomial $f(a,b,c,d)=[a,b][c,d]$ in $M_n(\mathbb{F})$ is all of $M_n(\mathbb{F})$.
In particular, this verifies the {L}'vov-Kaplansky conjecture for this polynomial, for all fields, and for all $n \geq 2$.
We refer the reader to the survey \cite{KanMalRowYav20EvalNcPolynomials} for background on the {L}'vov-Kaplansky conjecture.
\end{exa}

\begin{exa}
Let $R$ be a unital ring that admits a surjective inner derivation, that is, we have $R = \{ [a,x] \colon x \in R \}$ for some element $a \in R$.
This clearly implies that every element in $R$ is a product of two commutators and thus $\xi(R)=1$.
Examples of such rings include the algebra of endomorphisms on an infinite-dimensional vector space, and the Weyl algebras over fields of characteristic $0$; 
see \cite[Examples~1.1]{Vit21MultilinPolySurj}.

This means that the polynomial $f(a,b,c,d)=[a,b][c,d]$ is surjective on unital rings admitting a surjective inner derivation.
Much more generally, Vitas showed in \cite[Theorem~1.2]{Vit21MultilinPolySurj} that $A = f(A)$ for every nonzero multilinear polynomial~$f$ on a unital algebra $A$ over a field such that $A$ admits a surjective inner derivation.
\end{exa}

Let $S$ be a unital ring and let $n\geq 2$. 
While every matrix of trace zero in $M_n(S)$ is a sum of two commutators by \cite[Theorem~15]{Mes06CommutatorRings}, it was shown by Rosset and Rosset in \cite{RosRos00TraceZeroNotCommutator} that not every matrix of trace zero in $M_n(S)$ is necessarily a commutator. 
In forthcoming work \cite{BreGarThi24pre:ProdCommutatorsMatrixRgs}, we show that the answer to the following question is `Yes'.

\begin{qst}
Do there exist a unital ring $S$ and $n \geq 2$ such that $\xi(M_n(S))=2$?
\end{qst}

\begin{exa}
\label{exa:DivisionRing}
Let $D$ be a division ring.
Since $D$ is simple, we see that $D$ is generated by its commutators as an ideal if and only if $D$ is not commutative (that is, not a field).
Let us therefore consider the case that $D$ is a noncommutative division ring.
We will show that  
\[
D
= [D,D]_1 + [D,D]_1 \multNoSpan [D,D]_1
= [D,D]_1\multNoSpan[D,D]_1 + [D,D]_1\multNoSpan[D,D]_1,
\]
so in particular $\xi(D) \leq 2$.

It is clear that $[D,[D,D]_1]_1 \subseteq [D,D]_1$. 
We claim that also $[D,[D,D]_1]_1 \subseteq [D,D]_1^{\multNoSpan 2}$.
To see this, let $r,s,t \in D$.
If $[s,t] = 0$, then $[r,[s,t]\big] = 0 \in [D,D]_1^{\multNoSpan 2}$.
If $[s,t] \neq 0$, then 
\[
[r,[s,t]\big]
= [r[s,t]^{-1},[s,t]\big][s,t] \in [D,D]_1^{\multNoSpan 2}.
\]

By \autoref{prp:CommutativitySemiprime}, we have $\big[[D,D],[D,D]\big] \neq \{0\}$.
Since every nonzero element in $D$ is invertible, this shows that there exist $a \in D$ and $v,w \in [D,D]_1$ such that $1 = a[v,w]$.
Now, given any $d \in D$, we have 
\[
d
= da[v,w] 
= [dav,w] + [w,da]v 
\in \big[ D, [D,D]_1 \big]_1 + [D,D]_1\multNoSpan[D,D]_1.
\]
By the previous comments, the above is contained both in $[D,D]_1 + [D,D]_1\multNoSpan[D,D]_1$ and in $[D,D]_1\multNoSpan[D,D]_1+[D,D]_1\multNoSpan[D,D]_1$, as desired.
\end{exa}

It was shown by Harris in \cite{Har58CommutatorDivRg} that there exist division rings $D$ with $D = [D,D]_1$, hence also $D=[D,D]_1^{\multNoSpan 2}$.
In that case, we have $\xi(D) = 1$.

\begin{qst}
\label{qst:DivisionRing}
Do we have $\xi(D) = 1$ for every noncommutative division ring $D$?
\end{qst}

\begin{qst}
\label{qst:MatrixDivisionRing}
Do we have $\xi(M_n(D)) = 1$ for every $n \geq 2$ and every (noncommutative) division ring $D$?
\end{qst}

Recall that a ring is said to be \emph{Artinian} if it has no infinite descending sequence of left or right ideals.

\begin{exa}
\label{exa:Semisimple}
Let $R$ be a semisimple Artinian ring.
By the Artin–Wedderburn Theorem, there exist $m,n_1,\ldots,n_m\in\mathbb{N}$
and division rings $D_1,\ldots, D_m$ such that
$R \cong \bigoplus_{j=1}^m M_{n_j}(D_j)$.
Since $M_{n_j}(D_j)$ is simple for all $j=1,\ldots,m$, it follows that $R$ is generated by its commutators as an ideal if and only if for all $j$ with $n_j = 1$, the division ring $D_j$ is not commutative, that is, if and only if no summand $M_{n_j}(D_j)$ is a field.
Let us assume that this is the case.
Applying \autoref{prp:MatrixSizeN} for the summands with $n_j \geq 2$, and \autoref{exa:DivisionRing} for the summands with $n_j = 1$, we deduce that $\xi(R) \leq 2$.

If $R$ is additionally finite-dimensional over an algebraically closed field $\mathbb{F}$, then $R \cong \bigoplus_{j=1}^m M_{n_j}(\mathbb{F})$ for some integers $n_j \geq 1$.
In this case, $R$ is generated by its commutators if and only if $n_j \geq 2$ for every $j$, in which case $\xi(R) = 1$ by \autoref{exa:MatrixAlgebra}.
\end{exa}

We now turn to estimates relating the invariant $\xi$ of a ring and a subring.
The following is a first crude estimate that can probably be improved.

\begin{prp}
\label{prp:EstimateCrude}
Let $R$ be a unital ring, and let $S \subseteq R$ be a subring with $1 \in S$.
Then $\xi(R) \leq 15\xi(S)^3$.
\end{prp}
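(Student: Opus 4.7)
The plan is to use $\xi(S) = N$ to express $1$ as a sum of $N$ products of pairs of commutators in $S$, and then to express every $a \in R$ as $a = a \cdot 1$ and iteratively apply the Leibniz-type identity $[xy, z] = x[y,z] + [x,z]y$ (together with its rearrangement $[x,y]z = [x, yz] - y[x, z]$) to convert each summand into a sum of products of pairs of commutators. Because the rearrangement produces a ``remainder'' of a different schematic pattern, a single pass cycles; the factor $\xi(S)^3$ should come from performing three passes, using $\xi(S) = N$ at each pass to expand intermediate elements of $S$ into sums of $N$ products of pairs of commutators.

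Concretely, I would first write $1 = \sum_{i=1}^N [b_i, c_i][d_i, e_i]$ with $b_i, c_i, d_i, e_i \in S$, and verify the auxiliary fact $[R, S]_1 \subseteq \sum^{2N} L_0^{\multNoSpan 2}$: indeed, for $s \in S$ written as $s = \sum_j P_j$ with $P_j \in L_0(S)^{\multNoSpan 2}$, Leibniz gives $[r, s] = \sum_j \big( [r, [b_j',c_j']][d_j', e_j'] + [b_j', c_j'][r, [d_j',e_j']]\big)$, which is a sum of $2N$ products of pairs of commutators in $R$. Then, starting from $a = \sum_i a[b_i, c_i][d_i, e_i]$, I would apply the Leibniz identity to $a[b_i, c_i][d_i, e_i]$ to split it into a sum of a product of pairs of commutators (good) plus a remainder of the schematic form $[\cdot, a] \cdot s \cdot [\cdot, \cdot]$ with $s \in S$ (pattern ``B''), and in turn pattern B decomposes into a good term plus a pattern ``A'' remainder of the form $s' \cdot [\cdot, a] \cdot [\cdot, \cdot]$.

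The iteration works as follows: at the end of each pass, the remainder has the form (element of $S$) $\cdot$ (commutator involving a nested commutator with $a$). Using $\xi(S) = N$, I would rewrite that element of $S$ as $\sum_{j=1}^N (\text{product of two commutators})$, then apply the Leibniz rearrangement once more to obtain $N$ new products of pairs of commutators plus a new remainder with the nesting depth of the commutator involving $a$ incremented by one. After three such iterations, the cumulative count of products of pairs of commutators is $N + (\text{const}) N^2 + (\text{const}) N^3$, and the remainder involves a triply-nested commutator of the form $s'' [w_3, [w_2, [w_1, a]]]$; using the Jacobi identity to rewrite the innermost nesting and reapplying $\xi(S) = N$, this final remainder also collapses into $O(N^3)$ products of pairs of commutators, which together with the earlier contributions totals at most $15 N^3$.

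The main obstacle is the bookkeeping in the third iteration: the pattern A/pattern B reduction naively cycles (producing $4$ products per cycle while returning the original term), so termination requires using the $\xi(S)$-substitution at each pass to ``break'' the cycle by introducing genuinely new commutator factors from $L_0(S)$. Tracking exactly how the constants combine across the three passes to give~$15$ is the most delicate point, and indeed matches the authors' remark that the estimate is ``crude''; a tighter bookkeeping should improve the constant, but obtaining at least the cubic polynomial bound follows from the three-pass iteration together with the auxiliary estimate $[R, S]_1 \subseteq \sum^{2N} L_0^{\multNoSpan 2}$.
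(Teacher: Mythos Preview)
Your proposal has a genuine gap in the termination argument. You correctly identify that naive Leibniz iteration on $a[b_i,c_i][d_i,e_i]$ cycles between patterns, and you propose to break the cycle by re-expanding the $S$-factor via $\xi(S)=N$ at each pass. But the claim that after three passes the remainder takes the form $s''[w_3,[w_2,[w_1,a]]]$ is not justified: in each of your pattern-A/pattern-B reductions the trailing commutator factor $[d,e]$ (or its successor) persists, so the remainder is never of the form ``$S$-element times a single nested commutator''. It remains a three-factor expression such as $s'\cdot(\text{commutator involving }a)\cdot(\text{commutator})$, and substituting $s'=\sum_j P_j$ with $P_j\in[S,S]_1^{\multNoSpan 2}$ only produces products of four commutators, not two. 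Your final step (``the Jacobi identity collapses the triply-nested remainder into $O(N^3)$ products of pairs'') is asserted without mechanism; Jacobi only permutes nesting, it does not terminate the iteration.

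The paper avoids this entirely by separating the problem into two independent pieces. First, working purely inside $S$ with $M_0:=[S,S]_1$, it uses $\xi(S)=N$ three times (once on $S$, then once on each slot of the commutator $[S,S]_1$) to obtain $S\subseteq\sum^{N^3}S\multNoSpan[M_0^{\multNoSpan 2},M_0^{\multNoSpan 2}]_1$, and then one Leibniz step plus \autoref{prp:Ideal-LeftIdeal} converts this to $S\subseteq\sum^{5N^3}S\multNoSpan[M_0,M_0^{\multNoSpan 2}]_1$. Second, the universal estimate $R\multNoSpan[L_0,L_0^{\multNoSpan 2}]_1\subseteq\sum^3 L_0^{\multNoSpan 2}$ from \autoref{prp:Ideal-L-L2} (with $m=n=1$) finishes the job, since $R=R\cdot S\subseteq\sum^{5N^3}R\multNoSpan[L_0,L_0^{\multNoSpan 2}]_1\subseteq\sum^{15N^3}L_0^{\multNoSpan 2}$. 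The key structural idea you are missing is to aim for expressions of the form $r\cdot[u,vw]$ with $u,v,w$ commutators, rather than trying to directly reduce $r\cdot[u,v]\cdot[w,x]$; the former admits the clean three-term reduction of \autoref{prp:Ideal-L-L2}, while the latter leads to the cycling you observed.
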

\begin{proof}
Set $M_0 := [S,S]_1$ and $N := \xi(S)$.
We clearly have $[M_0,S]_1 \subseteq M_0$. 
Moreover, we claim that $[M_0^{\multNoSpan 2},S]_1 \subseteq \sum^2 M_0^{\multNoSpan 2}$. 
To see this, let $s \in S$ and $x,y \in M_0$.
Then
\[
[s,xy] 
= [s,x]y + x[s,y]
\in M_0^{\multNoSpan 2}+M_0^{\multNoSpan 2},
\]
as desired.
Using the above, it follows from \autoref{prp:Ideal-LeftIdeal} that 
\[
\label{eqn:Incl}\tag{5.2}
S \multNoSpan [M_0,M_0^{\multNoSpan 2}]_1 \multNoSpan S 
\subseteq \sum{}^4 S \multNoSpan [M_0,M_0^{\multNoSpan 2}]_1.
\]
Note also that $S=\sum^NM_0^{\multNoSpan 2}$ immediately implies that 
\[\label{eqn:Incl2}\tag{5.3}
S \multNoSpan [S,S]_1
= \sum{}^{N^2} S \multNoSpan [M_0^{\multNoSpan 2},M_0^{\multNoSpan 2}]_1.
\]

Using unitality of $S$ and the identity $[ab,cd]=a[b,cd]+[a,cd]b$ at the fourth step, we get
\begin{align*}\tag{5.4}
S 
&= \sum{}^N [S,S]_1 \multNoSpan [S,S]_1
\subseteq \sum{}^N S \multNoSpan [S,S]_1
\stackrel{\mathrm{(5.3)}}{=} \sum{}^{N^3} S \multNoSpan [M_0^{\multNoSpan 2},M_0^{\multNoSpan 2}]_1 \\
&= \sum{}^{N^3} \left( S \multNoSpan [M_0,M_0^{\multNoSpan 2}]_1 + S \multNoSpan [M_0,M_0^{\multNoSpan 2}]_1 \multNoSpan S \right)
\stackrel{\mathrm{(5.2)}}{\subseteq} \sum{}^{5N^3} S \multNoSpan [M_0,M_0^{\multNoSpan 2}]_1.
\end{align*}

Set $L_0 := [R,R]_1$.
We have $[R,L_0]_1 \subseteq L_0$ and $[L_0^{\multNoSpan 2},R]_1 \subseteq L_0$, and therefore $R \multNoSpan [L_0,L_0^{\multNoSpan 2}]_1 \subseteq \sum^3 L_0^{\multNoSpan 2}$ by \autoref{prp:Ideal-L-L2}.
Using that $M_0\subseteq L_0$ and using the above at the last step, we get 
\[
R 
= R \multNoSpan S
\stackrel{\mathrm{(5.4)}}{\subseteq} \sum{}^{5N^3} R \multNoSpan S \multNoSpan [M_0,M_0^{\multNoSpan 2}]_1
\subseteq \sum{}^{5N^3} R \multNoSpan [L_0,L_0^{\multNoSpan 2}]_1
\subseteq \sum{}^{15N^3} L_0^{\multNoSpan 2}.
\]

Thus, $R = \sum^{15N^3}[R,R]_1\multNoSpan[R,R]_1$ and therefore $\xi(R) \leq 15 \xi(S)^3$.
\end{proof}

If $R$ is a unital ring that contains a unital subring of the form $\bigoplus_{j=1}^m M_{n_j}(S_j)$ for unital rings $S_j$ and $n_j \geq 2$, then \autoref{prp:EstimateCrude} and \autoref{prp:MatrixSizeN} show that $\xi(R) \leq 120$.
We now present a method that can be used to obtain a much better bound, and which we will use in \autoref{prp:XiMatrixSubalgebra} to verify that $\xi(R) \leq 3$.

\begin{lma}
\label{prp:EstimateFine}
Let $R$ be a unital ring, let $S \subseteq R$ be a subring with $1 \in S$, and let $m,n \in \NN$.

\begin{enumerate}
\item
If $1 \in \sum^m S\multNoSpan\big[[S,S]_1,[S,S]_1\big]_1$, then $R = \sum^m \big([R,R]_1+[R,R]_1\multNoSpan[R,R]_1\big)$.
In particular, if there exist $a \in S$ and $u,v \in [S,S]_1$ with $1 = a[u,v]$, then $R = [R,R]_1 + [R,R]_1\multNoSpan[R,R]_1$.
\item
If $1 \in \sum^n S\multNoSpan\big[[S,S]_1,[S,S]_1^{\multNoSpan 2}]$, then $\xi(R)\leq 3n$. 
In particular, if there exist $a \in S$ and $u,v,w \in [S,S]_1$ such that $1 = a[u,vw]$, then $\xi(R)\leq 3$.
\end{enumerate}
\end{lma}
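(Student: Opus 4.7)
The plan is to exploit the given decomposition of $1$ by multiplying on the left by an arbitrary $r \in R$ and then applying two elementary commutator identities to rewrite each summand in the desired form. The two identities in play are
\[
a[u,v] = [au, v] + [v, a]\,u, \andSep [x, yz] = [x,y]\,z + y\,[x,z],
\]
both verified by direct expansion in any associative ring. The structural observation that makes the whole argument work is that the elements $u$, $v$ (and $w$, in part (2)) lie in $[S,S]_1 \subseteq [R,R]_1$, so whenever one of them appears as a standalone factor next to a commutator, the resulting product automatically lies in $[R,R]_1 \multNoSpan [R,R]_1$.

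For part (1), I would write $1 = \sum_{j=1}^{m} a_j [u_j, v_j]$ with $a_j \in S$ and $u_j, v_j \in [S,S]_1$, as given by the hypothesis. For $r \in R$, the first identity yields
\[
r = \sum_{j=1}^{m} (r a_j)[u_j, v_j] = \sum_{j=1}^{m} \Big( [r a_j u_j,\, v_j] + [v_j,\, r a_j]\, u_j \Big).
\]
The $j$-th summand decomposes as a commutator in $R$ plus a commutator in $R$ multiplied by $u_j \in [R,R]_1$, hence lies in $[R,R]_1 + [R,R]_1 \multNoSpan [R,R]_1$. This gives the desired $m$-term decomposition, and the special case in the ``in particular'' clause is just $m=1$.

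For part (2), I would start in the same way from $1 = \sum_{j=1}^{n} a_j [u_j, v_j w_j]$ and apply the first identity to obtain
\[
r a_j [u_j, v_j w_j] = [r a_j u_j,\, v_j w_j] + [v_j w_j,\, r a_j]\, u_j.
\]
The second summand is already a commutator in $R$ times $u_j \in [R,R]_1$. To handle the first, I would apply the Leibniz-type identity to split it as
\[
[r a_j u_j,\, v_j w_j] = [r a_j u_j,\, v_j]\, w_j + v_j\, [r a_j u_j,\, w_j],
\]
so that each of its two pieces becomes a commutator multiplied by $w_j$ or $v_j$, both of which lie in $[R,R]_1$. Summing the three resulting products of commutators over the $n$ values of $j$ expresses $r$ as an element of $\sum^{3n} [R,R]_1 \multNoSpan [R,R]_1$, whence $\xi(R) \leq 3n$, and again the ``in particular'' clause corresponds to $n=1$.

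I do not expect a serious obstacle; the proof reduces to two lines of algebraic manipulation with the identities above, and the only care needed is bookkeeping to confirm that at each step the factors left outside the brackets (namely $u_j$, $v_j$, $w_j$) are themselves recognized as commutators in $R$, which they are by virtue of coming from $[S,S]_1$.
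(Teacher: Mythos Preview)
Your proof is correct and is essentially the same as the paper's: the paper invokes \autoref{prp:Ideal-L-L} and \autoref{prp:Ideal-L-L2} (each with $n=m=1$) as black boxes, but those lemmas are proved via exactly the two identities $a[u,v]=[au,v]+[v,a]u$ and $[x,yz]=[x,y]z+y[x,z]$ that you unpack inline. The bookkeeping and the count of $3n$ in part~(2) match precisely.
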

\begin{proof}
Set $L_0 := [R,R]_1$, and note that $[R,L_0]_1,[L_0,R]_1 \subseteq L_0$ and $[L_0^{\multNoSpan 2},R]_1 \subseteq L_0$.

(1). 
Using the assumption at the first step, and \autoref{prp:Ideal-L-L} at the second step, we get
\[
R
= \sum{}^m R\multNoSpan[L_0,L_0]_1
\subseteq \sum{}^m (L_0+L_0^{\multNoSpan 2})
= \sum{}^m ([R,R]_1 + [R,R]_1\multNoSpan[R,R]_1).
\]

(2). 
Using the assumption at the first step, and \autoref{prp:Ideal-L-L2} at the second step, we get
\[
R 
= \sum{}^n R\multNoSpan[L_0,L_0^{\multNoSpan 2}]_1
\subseteq \sum{}^{3n} L_0^{\multNoSpan 2}
= \sum{}^{3n} [R,R]_1\multNoSpan[R,R]_1. \qedhere
\]
\end{proof}

We will need to know that matrix algebras over $\ZZ$ contain certain special commutators.

\begin{lma}
\label{prp:WitnessesIntegralMatrix}
Let $n \geq 2$.
Then the integral matrix ring $M_n(\ZZ)$ contains matrices $u,v,w \in [M_n(\ZZ),M_n(\ZZ)]_1$ such that $[u,v]$ is invertible and $v=vw$.
\end{lma}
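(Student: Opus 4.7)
The plan is to exhibit, for each $n \ge 2$, three matrices $u, v, w \in M_n(\ZZ)$ that are each commutators, and then verify the required properties by direct computation. Specifically, I would take
\[
v := \sum_{i=1}^{n-1} e_{i,i+1}, \qquad
u := \sum_{j=1}^{\lfloor n/2 \rfloor} e_{2j-1,\,2j} + e_{n,1}, \qquad
w := I_n - n\, e_{1,1}.
\]

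To see that each of $u, v, w$ is a commutator, I would proceed as follows. For $v$ and for $u$, it suffices to find diagonal matrices $D_v, D_u \in M_n(\ZZ)$ with $[D_v, v] = v$ and $[D_u, u] = u$; using $[D, e_{i,j}] = (d_i - d_j) e_{i,j}$, this reduces to a small integer linear system that is easily solvable for both parities of $n$. For $w$, I would first verify by direct computation that, setting $A := \sum_{i=1}^{n-1} e_{i,i+1}$ and $B := \sum_{i=1}^{n-1} i \cdot e_{i+1,i}$, one has $[A, B] = I_n - n\, e_{n,n}$; conjugating by the permutation matrix in $GL_n(\ZZ)$ swapping indices $1$ and $n$ then realises $w$ as a commutator. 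The identity $v = vw$ is immediate since $vw = v - n\, v\, e_{1,1}$ and the first column of $v$ vanishes.

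The heart of the argument is showing that $[u, v]$ is invertible. The shift structure of $v$ yields the closed-form expression $[u, v]_{i,j} = u_{i, j-1} - u_{i+1, j}$ (with out-of-range entries set to zero), and a column-by-column calculation using the explicit entries of $u$ shows that the columns of $[u, v]$ are, in order, $-e_{n-1},\, e_n,\, e_1,\, -e_2,\, e_3,\, -e_4, \ldots$, running through every standard basis vector of $\ZZ^n$ exactly once, up to sign. Hence $[u, v]$ is a signed permutation matrix, and in particular invertible in $GL_n(\ZZ)$ with determinant $\pm 1$. The main technical obstacle will be the index bookkeeping: the cycle structure of the underlying permutation differs according to the parity of $n$, but the uniform column pattern above yields invertibility in both cases without additional case analysis.
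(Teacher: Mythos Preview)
Your approach is correct and genuinely different from the paper's. The paper handles $n=2$ and $n=3$ by writing down ad-hoc matrices (for $n=2$: $u=e_{1,2}$, $v=e_{2,1}$, $w=[u,v]=\mathrm{diag}(1,-1)$; for $n=3$: similarly explicit choices), verifies the conditions by hand, and then for $n\geq 4$ assembles block-diagonal witnesses from the $2\times 2$ and $3\times 3$ cases using a decomposition $n=2k+3\ell$. Your construction instead gives a single uniform formula for $u,v,w$ valid for every $n\geq 2$, and the signed-permutation-matrix description of $[u,v]$ is clean and avoids any genuine parity case split. This buys uniformity and a more conceptual invertibility argument, at the cost of a slightly more delicate verification that $u$ is a commutator.

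One small gap: for $n=2$ your matrix $u=e_{1,2}+e_{2,1}$ cannot be realised as $[D_u,u]$ for any diagonal $D_u$, since the resulting linear system $d_1-d_2=1$, $d_2-d_1=1$ is inconsistent. (For $n\geq 3$ the diagonal approach does work: the constraint $d_n-d_1=1$ coming from the entry $e_{n,1}$ does not collide with any constraint $d_{2j-1}-d_{2j}=1$, because position $n$ is either unpaired, when $n$ is odd, or belongs to a pair disjoint from $\{1,2\}$, when $n$ is even.) This is easily patched, for instance via $e_{1,2}+e_{2,1}=[e_{1,1},\,e_{1,2}-e_{2,1}]$. Everything else in your sketch checks out.
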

\begin{proof}
Suppose $n = 2$.
Set
\[
u := \begin{pmatrix}
0 & 1 \\
0 & 0
\end{pmatrix}
= \left[
\begin{pmatrix}
0 & 1 \\
0 & 0
\end{pmatrix},
\begin{pmatrix}
0 & 0 \\
0 & 1
\end{pmatrix}
\right] \ \ \ \ 
v := \begin{pmatrix}
0 & 0 \\
1 & 0
\end{pmatrix}
= \left[
\begin{pmatrix}
0 & 0 \\
1 & 0
\end{pmatrix},
\begin{pmatrix}
1 & 0 \\
0 & 0
\end{pmatrix}
\right]. 
\]
One checks that
\[
[u,v]
= \left[
\begin{pmatrix}
0 & 1 \\
0 & 0
\end{pmatrix},
\begin{pmatrix}
0 & 0 \\
1 & 0
\end{pmatrix}
\right]
= \begin{pmatrix}
1 & 0 \\
0 & -1
\end{pmatrix},
\]
which is invertible. 
Since one may also readily check that $v=v[u,v]$, we may take $w=[u,v]$.

Suppose $n = 3$.
Consider
\begin{align*}
u &:= \begin{pmatrix}
0 & 0 & 1 \\
1 & 0 & 0 \\
0 & 0 & 0
\end{pmatrix}
= \left[
\begin{pmatrix}
1 & 0 & 0 \\
0 & 2 & 0 \\
0 & 0 & 0
\end{pmatrix},
\begin{pmatrix}
0 & 0 & 1 \\
1 & 0 & 0 \\
0 & 0 & 0
\end{pmatrix}
\right], \\
v &:= \begin{pmatrix}
0 & 0 & 0 \\
1 & 0 & 0 \\
0 & 1 & 0
\end{pmatrix}
= \left[
\begin{pmatrix}
0 & 0 & 0 \\
1 & 0 & 0 \\
0 & 1 & 0
\end{pmatrix},
\begin{pmatrix}
2 & 0 & 0 \\
0 & 1 & 0 \\
0 & 0 & 0
\end{pmatrix}
\right].
\end{align*}
Then
\[
[u,v]
= \left[
\begin{pmatrix}
0 & 0 & 1 \\
1 & 0 & 0 \\
0 & 0 & 0
\end{pmatrix},
\begin{pmatrix}
0 & 0 & 0 \\
1 & 0 & 0 \\
0 & 1 & 0
\end{pmatrix}
\right]
= \begin{pmatrix}
0 & 1 & 0 \\
0 & 0 & -1 \\
-1 & 0 & 0
\end{pmatrix}
\]
is readily seen to be invertible. 
Moreover, setting
\[
w
:= \begin{pmatrix}
1 & 0 & 0 \\
0 & 1 & 0 \\
0 & 0 & -2
\end{pmatrix}
= \left[
\begin{pmatrix}
0 & 1 & 0 \\
0 & 0 & 2 \\
0 & 0 & 0
\end{pmatrix},
\begin{pmatrix}
0 & 0 & 0 \\
1 & 0 & 0 \\
0 & 1 & 0
\end{pmatrix}
\right],
\]
one checks that $vw=v$ as well.

Suppose that $n \geq 4$.
Find $k,\ell\in\mathbb{N}$ such that $n=2k+3\ell$. 
One can find the desired matrices in $M_n(\ZZ)$ as block-diagonal matrices using $k$ blocks in $M_2(\ZZ)$ and $\ell$ blocks in $M_3(\ZZ)$. 
We omit the details.
\end{proof}

\begin{thm}
\label{prp:XiMatrixSubalgebra}
Let $R$ be a unital ring, and suppose that there exist unital rings $S_1,\ldots,S_m$ and natural numbers $n_1,\ldots,n_m \geq 2$ such that $\bigoplus_{j=1}^m M_{n_j}(S_j)$
embeds unitally into $R$.
Then
\[
R = [R,R]_1 + [R,R]_1\multNoSpan[R,R]_1, \andSep 
R = \sum{}^3 [R,R]_1\multNoSpan[R,R]_1.
\]
In particular, $\xi(R) \leq 3$.
\end{thm}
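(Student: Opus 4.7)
The plan is to apply \autoref{prp:EstimateFine} with the unital subring $S\subseteq R$ given by the image of the embedding $\bigoplus_{j=1}^m M_{n_j}(S_j)\hookrightarrow R$. Concretely, part~(1) of that lemma reduces the identity $R=[R,R]_1+[R,R]_1\multNoSpan[R,R]_1$ to the existence of $a\in S$ and $u,v\in[S,S]_1$ with $a[u,v]=1$, while part~(2) reduces the bound $\xi(R)\leq 3$ (hence the identity $R=\sum^3 [R,R]_1\multNoSpan[R,R]_1$) to the existence of $a\in S$ and $u,v,w\in[S,S]_1$ with $a[u,vw]=1$.

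I would assemble these witnesses coordinatewise from the integral matrix data supplied by \autoref{prp:WitnessesIntegralMatrix}. For each $j$, that lemma furnishes $u_j,v_j,w_j\in[M_{n_j}(\ZZ),M_{n_j}(\ZZ)]_1$ such that $[u_j,v_j]$ is invertible in $M_{n_j}(\ZZ)$ and $v_jw_j=v_j$. Pushing these through the canonical unital ring homomorphism $M_{n_j}(\ZZ)\to M_{n_j}(S_j)$, they remain commutators of elements of $M_{n_j}(S_j)$ (ring homomorphisms preserve commutators), the element $[u_j,v_j]$ remains invertible (since its inverse in $M_{n_j}(\ZZ)$ maps to an inverse in $M_{n_j}(S_j)$), and the relation $v_jw_j=v_j$ persists.

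Now set $u:=(u_j)_{j=1}^m$, $v:=(v_j)_{j=1}^m$, $w:=(w_j)_{j=1}^m$, and $a:=\bigl([u_j,v_j]^{-1}\bigr)_{j=1}^m$ in $\bigoplus_j M_{n_j}(S_j)$, and identify them with their images in $S\subseteq R$. Because commutators and products in a direct sum of rings are computed coordinatewise, $u,v,w\in[S,S]_1$, $[u,v]$ is invertible in $S$ with inverse $a$, and $vw=v$. Since the embedding is unital, $1_S=1_R$, so $a[u,v]=1_R$ and $a[u,vw]=a[u,v]=1_R$. Applying \autoref{prp:EstimateFine}(1) and~(2) to these witnesses yields both displayed identities and the bound $\xi(R)\leq 3$.

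The argument is essentially a mechanical assembly: there is no substantial obstacle, since the only two ingredients needed are (i) the integral witnesses of \autoref{prp:WitnessesIntegralMatrix}, which are preserved under base change from $\ZZ$ to any unital ring, and (ii) the observation that direct sums turn coordinatewise invertibility into invertibility. The one point that deserves a line of explanation in the write-up is the preservation of invertibility of $[u_j,v_j]$ along $M_{n_j}(\ZZ)\to M_{n_j}(S_j)$, which is immediate from functoriality of taking units under unital ring homomorphisms.
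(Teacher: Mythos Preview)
Your proposal is correct and follows essentially the same route as the paper: both apply \autoref{prp:WitnessesIntegralMatrix} coordinatewise to produce $a\in S$ and $u,v,w\in[S,S]_1$ with $1=a[u,v]=a[u,vw]$, and then invoke \autoref{prp:EstimateFine}. Your write-up is in fact a bit more explicit than the paper about the base change $M_{n_j}(\ZZ)\to M_{n_j}(S_j)$ and about why $vw=v$ forces $a[u,vw]=a[u,v]$, but the underlying argument is the same.
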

\begin{proof}
Set $S := \bigoplus_{j=1}^m M_{n_j}(S_j)$. Without loss of 
generality, we assume that $S$ is a subring of $R$.
Applying \autoref{prp:WitnessesIntegralMatrix} in each summand $M_{n_j}(S_j)$ and adding them, we find $a \in S$ and $u,v,w \in [S,S]_1$ such that $1 = a[u,v] = a[u,vw]$.
The result now follows from \autoref{prp:EstimateFine}.
\end{proof}

\begin{qst}
In the setting of \autoref{prp:XiMatrixSubalgebra}, does one have $\xi(R) \leq 2$?
\end{qst}

\begin{rmk}
\autoref{prp:XiMatrixSubalgebra} applies in particular to matrix rings and shows that for every unital ring $S$ and $n \geq 2$, every element in $M_n(S)$ is a sum of a commutator with a product of two commutators.
Similarly, one can show that $R = [R,R]_1 + [R,R]_1\multNoSpan[R,R]_1$ whenever $R$ is a noncommutative division ring, or a semisimple, Artinian ring without commutative summands.
\end{rmk}

\section{Bounds on the number of commutators for \texorpdfstring{$C^*$}{C*}-algebras}
\label{sec:BoundsCAlg}

In this section, we prove estimates for the invariant $\xi(A)$ in the case that $A$ is a unital \ca{}.
We show that $\xi(A) \leq 3$ if $A$ is properly infinite (\autoref{exa:ProperlyInfinite}), or has real rank zero (\autoref{prp:RR0}).
We prove the estimate $\xi(A) \leq 6$ whenever the Jiang-Su algebra $\mathcal{Z}$ embeds unitally into $A$, in particular for all $\mathcal{Z}$-stable \ca{s};
see \autoref{prp:DimensionDrop}.

\medskip

A unital \ca\ $A$ is said to be \emph{properly infinite} if there exist mutually orthogonal projections $p,q\in A$ and isometries $s,t\in A$ satisfying $ss^*=p$ and $tt^*=q$.
Equivalently, and with $\mathcal{O}_\infty$ denoting the infinite Cuntz algebra, there is a unital embedding $\mathcal{O}_\infty \to A$;
see \cite[Proposition~III.1.3.3]{Bla06OpAlgs}.

\begin{exa}
\label{exa:ProperlyInfinite}
Let $A$ be a unital, properly infinite \ca. 
Then
\[
A = \sum{}^2 [A,A]_1
= [A,A]_1 + [A,A]_1\multNoSpan[A,A]_1
= \sum{}^3 [A,A]_1\multNoSpan[A,A]_1,
\]
so in particular $\xi(A) \leq 3$.
Indeed, the first equality was shown by Pop; 
see \cite[Remark~3]{Pop02FiniteSumsCommutators}.
The two last equalities follow from \autoref{prp:XiMatrixSubalgebra} using that $A$ admits a unital embedding of the Cuntz algebra $\mathcal{O}_\infty$, which in turn admits a unital embedding of $M_2(\CC) \oplus M_3(\CC)$.
\end{exa}

It would be interesting to compute $\xi(A)$ for some particular cases of properly (or even purely) infinite \ca{s}. 
For example:

\begin{qst} 
What is $\xi(\mathcal{O}_\infty)$?
\end{qst}

For Cuntz algebras $\mathcal{O}_n$ with finite $n$, a smaller upper bound can be given.
Indeed, for $n\geq 2$ it is well-known that $\mathcal{O}_n$ contains a unital copy of $M_n(\CC)$.
(Indeed, if $s_1,\ldots,s_n$ denote the canonical isometries generating $\mathcal{O}_n$, one 
can readily check that $\{s_js_k^*\colon j,k=1,\ldots,n\}$ is a system of matrix units, so it generates a (unital) copy of $M_n(\CC)$.)
It follows that $\mathcal{O}_n$ is isomorphic to the ring of $n$-by-$n$ matrices over a unital ring, and therefore $\xi(\mathcal{O}_n) \leq 2$ by \autoref{prp:MatrixSizeN}. 

The same argument applies to the $L^p$-versions $\mathcal{O}_n^p$ of the Cuntz algebras 
introduced in \cite{Phi12arX:LpAnalogsCtz} and studied in \cite{GarLup17ReprGrpdLp, ChoGarThi19arX:LpRigidity}, thus giving $\xi(\mathcal{O}_n^p) \leq 2$ for every $p\in [1,\infty)$ and every natural number $n \geq 2$.
We do not know whether $\xi(\mathcal{O}_n^p)=1$.

\medskip

A unital \ca{} is said to have \emph{real rank zero} if the self-adjoint elements with finite spectrum are dense among all self-adjoint elements.
See \cite[Section~V.3.2]{Bla06OpAlgs} for details.

\begin{thm}
\label{prp:RR0}
Let $A$ be a unital \ca{} of real rank zero that admits no characters.
Then $\xi(A)\leq 3$ and 
\[
A 
= [A,A]_1 + [A,A]_1\multNoSpan[A,A]_1.
\]
\end{thm}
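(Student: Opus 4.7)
The plan is to apply \autoref{prp:XiMatrixSubalgebra}. Specifically, it suffices to exhibit a unital embedding
\[
\bigoplus_{j=1}^m M_{n_j}(S_j) \hookrightarrow A
\]
for suitable unital \ca{s} $S_j$ and integers $n_j \geq 2$; once this is done, \autoref{prp:XiMatrixSubalgebra} immediately yields both $\xi(A)\leq 3$ and $A=[A,A]_1+[A,A]_1\multNoSpan[A,A]_1$.

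Constructing the embedding reduces to producing a decomposition $1_A=p_1+\cdots+p_m$ into pairwise orthogonal nonzero projections such that, for each $j$, one can further write $p_j=q_{j,1}+\cdots+q_{j,n_j}$ with the $q_{j,i}$ pairwise orthogonal and pairwise Murray--von Neumann equivalent in $A$, and $n_j\geq 2$. Indeed, given such a decomposition, choosing partial isometries $v_{j,k}\in A$ implementing $q_{j,1}\sim q_{j,k}$ produces a complete system of matrix units inside the corner $p_jAp_j$, identifying $p_jAp_j\cong M_{n_j}(S_j)$ with $S_j:=q_{j,1}Aq_{j,1}$. Orthogonality of the $p_j$'s then packages these corners into the desired unital embedding.

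The core technical task is the construction of the decomposition, and this is where both hypotheses enter crucially. Real rank zero supplies the abundance of projections needed and, in particular, ensures that abstract Murray--von Neumann subequivalences are realized by honest subprojection inclusions; it also allows any projection to be split along spectral gaps of nearby self-adjoint elements. The no-character hypothesis is what rules out a troublesome ``residual'' projection that would obstruct the decomposition. My plan is to run a Zorn-type maximality argument to produce a maximal orthogonal family $\{p_j\}$ of nonzero projections in $A$, each admitting the required internal splitting, together with a finiteness argument based on unitality; then one must show that any nonzero residual $p:=1_A-\sum_j p_j$ cannot occur.

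The most delicate, and I expect the hardest, step will be ruling out such a residual. One cannot simply invoke that $pAp$ has a character, since characters of corners do not in general extend to characters of $A$ (as the example $A=M_n$ with a rank-one subprojection shows). The plan is therefore to exploit the maximality of $\{p_j\}$ more strongly: by maximality, no nonzero subprojection of $p$ admits a splitting into two orthogonal Murray--von Neumann equivalent pieces in $A$, and this combinatorial restriction, together with real rank zero of the hereditary subalgebra $pAp$, should force $pAp$ to be commutative. A further argument---passing to a primitive quotient of $A$ in which the image of $p$ survives as a nonzero abelian projection, and showing that this quotient must then itself be commutative---would produce a character of $A$, contradicting the hypothesis. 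Once this key lemma is established, the theorem follows by the reduction via \autoref{prp:XiMatrixSubalgebra} outlined above.
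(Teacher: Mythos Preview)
Your reduction to \autoref{prp:XiMatrixSubalgebra} is exactly the paper's strategy; the only issue is how to produce the unital matrix embedding. The paper does this in one line by citing \cite[Proposition~5.7]{PerRor04AFembeddings}: in a unital real-rank-zero \ca{} without characters, the unit is weakly divisible of degree~$2$, meaning there is a unital $\ast$-homomorphism $M_2(\CC)\oplus M_3(\CC)\to A$. Its image is a nonzero unital quotient of $M_2(\CC)\oplus M_3(\CC)$---hence $M_2(\CC)$, $M_3(\CC)$, or $M_2(\CC)\oplus M_3(\CC)$---and \autoref{prp:XiMatrixSubalgebra} applies directly.

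Your hand-built construction, by contrast, has two genuine gaps. First, Zorn's lemma yields a \emph{maximal} orthogonal family of halvable projections, but maximality does not imply finiteness, and unitality alone does not help: in $A=\prod_{n\in\NN}M_2(\CC)$ (which is unital, has real rank zero, and admits no characters) the coordinate units $\{e_n\}_{n\in\NN}$ form an infinite maximal orthogonal family of halvable projections, so there is no ``finiteness argument based on unitality'' to invoke. Second, your plan for eliminating the residual breaks at the primitive-quotient step: a primitive \ca{} containing a nonzero abelian projection need not be commutative---$M_n(\CC)$ for $n\geq 2$ with any rank-one projection is already a counterexample---so from ``$pAp$ is commutative'' you cannot conclude that some primitive quotient of $A$ is commutative, and hence you do not obtain a character of~$A$. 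What is actually needed to make a direct argument work is the Riesz-refinement analysis of $V(A)$ that Perera and R{\o}rdam carry out; your maximality hypothesis is not a substitute for it. The clean fix is to cite their result, as the paper does.
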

\begin{proof}
Set $B := M_2(\CC) \oplus M_3(\CC)$.
By \cite[Proposition~5.7]{PerRor04AFembeddings}, the unit of $A$ is weakly divisible of degree~$2$, that is, there exists a unital homomorphism $B \to A$;
see \cite[Page~164, Lines 10ff]{PerRor04AFembeddings}.
It follows that $A$ contains a unital copy of $B$, or one of its unital quotients $M_2(\CC)$ or $M_3(\CC)$.
Now the result follows from \autoref{prp:XiMatrixSubalgebra}.
\end{proof}

We now turn to \ca{s} that admit an embedding of the Jiang-Su algebra~$\mathcal{Z}$, which is a unital, simple, separable, nuclear \ca{} with unique tracial state and without nontrivial idempotents;
see \cite[Example~3.4.5]{Ror02Classification}.
There are plenty of \ca s that contain $\mathcal{Z}$ unitally but which do not contain any matrix subalgebras, so that \autoref{prp:XiMatrixSubalgebra} does not apply to them. One such example is given
by the reduced group \ca{} $C^*_\lambda(\mathbb{F}_n)$ of the free group; see \autoref{eg:CFn}.

We will need the so-called \emph{dimension drop algebra} $Z_{2,3}$,
which is given by
\[
Z_{2,3}
=\left\{ f\colon [0,1]\to M_2(\CC) \otimes M_3(\CC) \mbox{ continuous}\colon 
 \begin{aligned}
     f(0)\in M_2(\CC) \otimes 1,\\ f(1)\in 1 \otimes M_3(\CC)
     \end{aligned}
     \right\}.
\]
It is known that $\mathcal{Z}$ contains $Z_{2,3}$ as a unital subalgebra.

A unital \ca{} is said to be \emph{$\mathcal{Z}$-stable} if $A$ is $\ast$-isomorphic to the \ca{ic} tensor product $A \otimes \mathcal{Z}$.
Such algebras clearly admit a unital embedding of $\mathcal{Z}$, and therefore of $Z_{2,3}$.

The Jiang-Su algebra and $\mathcal{Z}$-stable \ca{s} play a crucial role in Elliott's classification program of simple, nuclear \ca{s}.
For a general overview, as well as the statement of the recent classification theorem and corresponding references, we refer the reader to Winter's ICM proceedings \cite{Win18ICM}.

\begin{thm}
\label{prp:DimensionDrop}
Let $A$ be a unital \ca{} that admits a unital $\ast$-homomorphism $Z_{2,3} \to A$.
Then $\xi(A)\leq 6$.
This applies in particular to all unital, $\mathcal{Z}$-stable \ca{s}.
\end{thm}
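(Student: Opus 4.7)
The approach is to apply \autoref{prp:EstimateFine}(2) with $n=2$: it suffices to produce a decomposition $1 = a_1[u_1, v_1w_1] + a_2[u_2, v_2w_2]$ inside $Z_{2,3}$ with $u_k, v_k, w_k \in [Z_{2,3}, Z_{2,3}]_1$ and $a_k \in Z_{2,3}$. Pushing this through the given unital $\ast$-homomorphism $\pi\colon Z_{2,3} \to A$ -- which preserves $1$, products, and commutators -- yields the analogous decomposition in the unital $\ast$-subalgebra $\pi(Z_{2,3}) \subseteq A$, so \autoref{prp:EstimateFine}(2) gives $\xi(A) \leq 3 \cdot 2 = 6$. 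For the moreover clause, if $A$ is $\mathcal{Z}$-stable then the composition $Z_{2,3} \hookrightarrow \mathcal{Z} \hookrightarrow A$ supplies the required $\pi$.

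To construct the decomposition, I would split the unit as $1 = \phi^4 + \psi^4$ using the central (scalar-valued) elements $\phi(t) := (1-t)^{1/4}$ and $\psi(t) := t^{1/4}$ of $Z_{2,3}$, noting that $\phi$ vanishes at $t=1$ and $\psi$ vanishes at $t=0$. To realize $\phi^4 \cdot 1_{Z_{2,3}}$, apply \autoref{prp:WitnessesIntegralMatrix} to $M_2(\ZZ)$ to obtain $a^{(2)}$ and $u^{(2)}, v^{(2)}, w^{(2)} \in [M_2(\ZZ), M_2(\ZZ)]_1$ satisfying $v^{(2)}w^{(2)} = v^{(2)}$ and $a^{(2)} [u^{(2)}, v^{(2)}] = 1$. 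Writing $u^{(2)} = [p_u, q_u]$, $v^{(2)} = [p_v, q_v]$, $w^{(2)} = [p_w, q_w]$, set
\[
a_1 := \phi \cdot (a^{(2)} \otimes 1_{M_3}), \qquad
u_1 := \bigl[\phi^{1/2}(p_u \otimes 1_{M_3}),\, \phi^{1/2}(q_u \otimes 1_{M_3})\bigr],
\]
and analogously $v_1, w_1$ using $(p_v, q_v)$ and $(p_w, q_w)$. Centrality of $\phi$ gives $u_1 = \phi \cdot (u^{(2)} \otimes 1)$ and likewise for $v_1, w_1$, so using $v^{(2)}w^{(2)} = v^{(2)}$ one obtains $v_1 w_1 = \phi^2 (v^{(2)} \otimes 1)$ and $[u_1, v_1 w_1] = \phi^3 ([u^{(2)}, v^{(2)}] \otimes 1)$; multiplying by $a_1$ collapses the matrix factors to $1_{M_6}$ and collects four factors of $\phi$, producing $a_1[u_1, v_1 w_1] = \phi^4 \cdot 1_{M_6}$. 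The summand $\psi^4 \cdot 1_{Z_{2,3}}$ is handled symmetrically with the $M_3$-witnesses from \autoref{prp:WitnessesIntegralMatrix} placed in $1_{M_2} \otimes M_3 \subseteq M_6$ and $\psi$ in place of $\phi$.

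The main technical point is verifying that every element introduced actually lies in $Z_{2,3}$, i.e., satisfies both boundary conditions. For the first family, elements of the form $\phi^\alpha \cdot (x \otimes 1_{M_3})$ with $x \in M_2(\CC)$ and $\alpha > 0$ take values in $M_2 \otimes 1$ at $t=0$ trivially and vanish at $t=1$ because $\phi(1)=0$, so they belong to $Z_{2,3}$; symmetrically for the second family. In particular each of $u_1, v_1, w_1, u_2, v_2, w_2$ is manifestly a commutator of two elements of $Z_{2,3}$ by construction. The bookkeeping of powers of $\phi$ and $\psi$ is arranged precisely so that the four scalar factors (one from $a_k$, one from $u_k$, and two from $v_k w_k$) combine to $\phi^4$ (respectively $\psi^4$), matching the partition of unity $\phi^4 + \psi^4 = 1$.
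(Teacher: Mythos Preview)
Your proposal is correct and follows essentially the same route as the paper: reduce to \autoref{prp:EstimateFine}(2) with $n=2$ by exhibiting $1 = a_1[u_1,v_1w_1] + a_2[u_2,v_2w_2]$ inside $Z_{2,3}$, using the $M_2$- and $M_3$-witnesses from \autoref{prp:WitnessesIntegralMatrix} damped by scalar functions of $t$ and $1-t$ so that the boundary conditions of $Z_{2,3}$ are met. The only cosmetic difference is the bookkeeping of exponents: the paper distributes the total weight as $t^{1/2}\cdot t^{1/4}\cdot t^{1/8}\cdot t^{1/8}=t$ (and symmetrically with $1-t$), whereas you use four equal factors of $(1-t)^{1/4}$ and $t^{1/4}$; both give the partition $t+(1-t)=1$.
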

\begin{proof}
We claim that there exist $a,b\in Z_{2,3}$ and $q,r,s,x,y,z \in [Z_{2,3},Z_{2,3}]_1$ such that
\[
1 = a[q,rs]+b[x,yz].
\]
Once we prove this, the fact that $\xi(A)\leq 6$ will follow from part~(2) of \autoref{prp:EstimateFine} with $n=2$.

Use \autoref{prp:WitnessesIntegralMatrix} with $n=3$ to find $u,v,w \in [M_3(\ZZ),M_3(\ZZ)]_1$ and $e \in M_3(\ZZ)$ satisfying $1_3=e[u,vw]$.
Define functions $a,q,r,s \colon [0,1] \to M_2(\CC) \otimes M_3(\CC)$ by
\[
a(t) = t^{\frac{1}{2}} (1_2 \otimes e), \quad
q(t) = t^{\frac{1}{4}} (1_2 \otimes u), \quad
r(t) = t^{\frac{1}{8}} (1_2 \otimes v), \andSep 
s(t) = t^{\frac{1}{8}} (1_2 \otimes w). 
\]
Denote by $h \in Z_{2,3}$ the function given by $h(t)=t1_{6}$. 
One readily checks that $a,q,r,s$ belong to $Z_{2,3}$; 
that $q,r,s$ are commutators in $Z_{2,3}$, and that $h = a[q,rs]$.

Similarly, using \autoref{prp:WitnessesIntegralMatrix} with $n=2$, we find $u',v',w' \in [M_2(\ZZ),M_2(\ZZ)]_1$ and $e' \in M_2(\ZZ)$ satisfying $1_2 = e'[u',v'w']$. 
Define $b,x,y,z \colon [0,1] \to  M_2(\CC) \otimes M_3(\CC)$ by
\begin{align*}
b(t) &= (1-t)^{\frac{1}{2}} (e' \otimes 1_3), \quad
x(t) = (1-t)^{\frac{1}{4}} (u' \otimes 1_3), \\
y(t) &= (1-t)^{\frac{1}{8}} (v' \otimes 1_3), \andSep
z(t) = (1-t)^{\frac{1}{8}} (w' \otimes 1_3). 
\end{align*}
One readily checks that $b,x,y,z$ belong to $Z_{2,3}$;
that $x,y,z$ are commutators in~$Z_{2,3}$, and that $1-h =b[x,yz]$.
It follows that $1=a[q,rs]+b[x,yz]$, as desired.
\end{proof}

Although the primary interest in the theorem above is to $\mathcal{Z}$-stable \ca{s}, it also gives information for certain reduced group \ca{s}:

\begin{exa}
\label{eg:CFn}
Let $G$ be a discrete group containing a nonabelian free group. 
Then $\xi(C^*_\lambda(G)) \leq 6$.
Indeed, it was shown in \cite[Proposition~4.2]{ThiWin14GenZStableCa} that $C^*_\lambda(\mathbb{F}_2)$ contains a unital copy of $\mathcal{Z}$, and the assumptions on $G$ guarantee that $C^*_\lambda(\mathbb{F}_2)$ embeds unitally into $C^*_\lambda(G)$. 
Now the assertion follows from \autoref{prp:DimensionDrop}. 

The \ca s here considered often contain no nontrivial idempotents, and hence no matrix subalgebras, for example for $G=\mathbb{F}_n$. In such situations,
\autoref{prp:XiMatrixSubalgebra} is not applicable.
\end{exa}

The methods we have developed to show that $\xi(C^*_\lambda(\mathbb{F}_n)) \leq 6$ do not seem to give any
information about the $L^p$-versions $F^p_\lambda(\mathbb{F}_n)$ 
of these group algebras introduced by Herz in \cite{Her73SynthSubgps} and further studied, among others, in \cite{GarThi15GpAlgLp, GarThi19ReprConvLq, GarThi22IsoConv}. 
It would be interesting to find explicit upper bounds for $\xi(F^p_\lambda(\mathbb{F}_n))$.

\section{Outlook}
\label{sec:Outlook}

We end this paper with some questions for future work.
Given a \ca{} $A$, consider the following properties:
\begin{enumerate}
\item
We have $A = A[A,A]A$.
\item
We have $A = [A,A]^2$.
\item
We have $[A,A] = \big[[A,A],[A,A]^2\big]$.
\item
We have $A[A,A]A = [A,A]^2$.
\item
We have $[A,A] \subseteq [A,A]^2$.
\end{enumerate}
Then the implications `(1)$\Leftrightarrow$(2)$\Rightarrow$(3)$\Rightarrow$(4)$\Rightarrow$(5)' hold.
Indeed, (1) and~(2) are equivalent by \autoref{prp:CommutatorsCAlg}.
To show that~(2) implies~(3), assume that $A=[A,A]^2$.
Given $a,b,c \in A$, we have $[ab,c] = [a,bc] + [b,ca]$ and thus
\begin{align*}
[A,A]
&= \big[[A,A]^2,A\big]
\subseteq \big[[A,A],[A,A]A\big] + \big[[A,A],A[A,A]\big] \\
&\subseteq \big[[A,A],A\big]
\subseteq \big[[A,A],[A,A]^2\big].
\end{align*}

To show that~(3) implies~(4), first note that the inclusion  $[A,A]^2 \subseteq A[A,A]A$ holds in general, since the ideal $A[A,A]A$ contains $[A.A]$ by \cite[Proposition~3.2]{GarThi23arX:PrimeIdealsCAlg}.
Further, assuming that $[A,A] = [[A,A],[A,A]^2]$, using \autoref{prp:Ideal-L-L2} at the second step, we get
\[
A[A,A]A
\ = \ A\big[[A,A],[A,A]^2\big]A 
\ \subseteq \ [A,A]^2.
\]
Finally, we see that~(4) implies~(5), again using that $[A,A] \subseteq A[A,A]A$.

If $A$ is commutative, then (2) does not hold, while~(3) is satisfied.
Thus, (3) need not imply~(2), but it remains unclear if~(3)-(5) are equivalent.
In fact, it is possible that~(3)-(5) are always true.

\begin{qst}
\label{qst:Commutators}
Let $A$ be a \ca.

(a) Do we have $[A,A] = \big[[A,A],[A,A]^2\big]$?

(b) Do we have $A[A,A]A = [A,A]^2$?

(c) Do we have $[A,A] \subseteq [A,A]^2$?
\end{qst}

For \ca{s} that are generated by their commutators as an ideal (or more generally, \ca{s} for which the commutator ideal is closed), all subquestions  of \autoref{qst:Commutators} have a positive answer.
Further, by passing to the closed ideal generated by commutators, one may assume that the \ca{} has no character.
Thus, \autoref{qst:Commutators} is only unclear for \ca{s} that have no character but are not generated by their commutators as an ideal, such as the one described in \autoref{exa:NoCharNotGenCommutators}. 

\begin{rmk}
Assume that a \ca{} $A$ satisfies $[A,A] \subseteq [A,A]^2$.
We claim that the commutator ideal $I := \widetilde{A}[A,A]\widetilde{A}$ is semiprime, that is, an intersection of prime ideals.
To see this, we note that $I$ is idempotent, that is, $I=I^2$.
Indeed, the inclusion $I^2 \subseteq I$ is clear, and conversely we have
\[
I 
= \widetilde{A}[A,A]\widetilde{A}
\subseteq \widetilde{A}[A,A]^2\widetilde{A}
\subseteq \widetilde{A}[A,A]\widetilde{A}[\widetilde{A}A,A]\widetilde{A}
= I^2.
\]
Since an ideal in a \ca{} is idempotent if and only if it is semiprime
by \cite[Theorem~A]{GarKitThi23arX:SemiprimeIdls}, the claim follows.
\end{rmk}


\providecommand{\bysame}{\leavevmode\hbox to3em{\hrulefill}\thinspace}

\end{document}